\NeedsTeXFormat{LaTeX2e}
\documentclass{amsart}
\usepackage{amssymb}
\usepackage{amsmath}
\usepackage{extarrows}
\usepackage{bbm}
\usepackage{mathrsfs}
\oddsidemargin=0.5cm
\evensidemargin=0.5cm
\textwidth=15cm
\allowdisplaybreaks[4]%??????

\makeatletter

\newcommand{\Rmnum}[1]{\expandafter\@slowromancap\romannumeral #1@}
\makeatother

\usepackage{xcolor}
%\pagecolor[rgb]{0.9, 0.99, 0.9}
\usepackage{amssymb}
\usepackage{amsmath,amsthm}
\usepackage{mathrsfs}

\usepackage{latexsym,euscript,dsfont}
\usepackage{bbm}
\usepackage{colortbl}
\usepackage{cite}
\usepackage{amsmath,amsthm,amsfonts,amssymb}
\usepackage{latexsym}
\usepackage{enumerate}
\usepackage{mathrsfs}
\def\bc{\begin{center}}
\def\ec{\end{center}}
\def\be{\begin{equation}}
\def\ee{\end{equation}}

\def\N{\mathbb N}

\newtheorem{lem}{Lemma}[section]

\newtheorem{dfn}[lem]{Definition}
\newtheorem{pro}[lem]{Proposition}
\newtheorem{thm}[lem]{Theorem}

\numberwithin{equation}{section}

\usepackage{colortbl}
%\textwidth=(paperwidth-6)cm
%\usepackage{refcheck}
%\renewcommand\thetheoremalph{\Alph{theoremalph}}
\begin{document}
\title[Hausdorff dimension of some exceptional sets in L\"{u}roth expansions]{Hausdorff dimension of some exceptional sets in L\"{u}roth expansions}

\author{Ao Wang $^{1}$, Xinyun Zhang$^{2\ast}$}
\address{$^{1}$ School  of  Mathematics  and  Statistics,  Huazhong  University  of Science  and  Technology, 430074 Wuhan, China}
\email{wang\_ao@hust.edu.cn}
\address{$^{2}$ School  of  Mathematics  and  Statistics,  Huazhong  University  of Science  and  Technology, 430074 Wuhan, China}
\email{xinyunzhang@hust.edu.cn}

\thanks{This work was supported by NSFC 12331005.}
\keywords{L\"{u}roth expansions, exceptional set,  Borel-Bernstein theorem, Hausdorff dimension}
\subjclass[2020]{Primary 11K55; Secondary 28A80}
\maketitle
\begin{abstract}
%Let $ \left[ d_1\left( x \right) ,d_2\left( x \right) ,\cdots \right] $ denote the L\"{u}roth expansionss of $ x\in \left( 0,1 \right]  $, define the sets
%\begin{align*}
% 	E_{\mathrm{sup}}\left( \psi \right)  =\Big\{ x\in \left( 0,1 \right] :\underset{n\rightarrow \infty}{\lim\mathrm{sup}}\frac{\log d_n\left( x \right)}{\psi \left( n \right)}=1 \Big\} , 	
%\end{align*}
%\begin{align*}
%	E\left( \psi \right) =\Big\{ x\in \left( 0,1 \right] :\underset{n\rightarrow \infty}{\lim}\frac{\log d_n\left( x \right)}{\psi \left( n \right)}=1 \Big\} 	
%\end{align*}
%and
%\begin{align*}
%	E_{\mathrm{inf}}\left( \psi \right) =\Big\{ x\in \left( 0,1 \right] :\underset{n\rightarrow \infty}{\lim  \mathrm{inf}}\frac{\log d_n\left( x \right)}{\psi \left( n \right)}=1 \Big\} ,
%\end{align*}
%where $ \psi :\mathbb{N} \rightarrow \mathbb{R} ^+ $ be an arbitrary function satisfying $ \lim\limits_{n\rightarrow \infty} \psi \left( n \right) =\infty   $ . In this paper, the size of the sets $ E_{\mathrm{sup}} $, $ E\left( \psi \right) $ and $ E_{\mathrm{inf}} $ is determined in sense of the Hausdorff dimension.
In this paper, we study the metrical theory of the growth rate of digits in L\"{u}roth expansions. More precisely, for $ x\in \left( 0,1 \right]  $, let  $ \left[ d_1\left( x \right) ,d_2\left( x \right) ,\cdots \right] $ denote the L\"{u}roth expansion of $ x $, we completely determine the Hausdorff dimension of the following sets
\begin{align*}
 	E_{\mathrm{sup}}\left( \psi \right)  =\Big\{ x\in \left( 0,1 \right] :\limsup\limits_{n\rightarrow \infty}\frac{\log d_n\left( x \right)}{\psi \left( n \right)}=1 \Big\} , 	
\end{align*}
\begin{align*}
	E\left( \psi \right) =\Big\{ x\in \left( 0,1 \right] :\lim_{n\rightarrow \infty}\frac{\log d_n\left( x \right)}{\psi \left( n \right)}=1 \Big\} 	
\end{align*}
and
\begin{align*}
	E_{\mathrm{inf}}\left( \psi \right) =\Big\{ x\in \left( 0,1 \right] :
\liminf_{n\rightarrow \infty}\frac{\log d_n\left( x \right)}{\psi \left( n \right)}=1 \Big\} ,
\end{align*}
where $ \psi :\mathbb{N} \rightarrow \mathbb{R} ^+ $ is an arbitrary function satisfying $ \psi \left( n \right) \rightarrow \infty$ as $n\rightarrow \infty$.
\end{abstract}

\section{introduction}
 In 1883, L\"{u}roth\cite{luroth1883ueber} introduced the  L\"{u}roth  expansion. The L\"{u}roth transformation $ T:\left( 0,1 \right] \rightarrow \left( 0,1 \right]  $ is defined as follows:
\begin{align} \label{def:Luroth map}
	T\left( x \right) :=d_1\left( x \right) \left( d_1\left( x \right) -1 \right) \Big( x-\frac{1}{d_1\left( x \right)} \Big) ,
\end{align}
where $ d_1\left( x \right) =\lfloor \frac{1}{x} \rfloor +1$ ($  \lfloor \cdot \rfloor   $ denotes the largest integer not exceeding a real number). Through this algorithm very real number $  x\in \left( 0,1 \right] $ can be uniquely expanded into an infinite seris as
\begin{align} \label{def:luroth expansions}
	x=&\frac{1}{d_1(x)}+\frac{1}{d_1(x)\left( d_1(x)-1 \right) d_2(x)}+\cdots  \notag \\
	&+\frac{1}{d_1(x)\left( d_1(x)-1 \right) \cdots d_{n-1}(x)\left( d_{n-1}(x)-1 \right) d_n(x)}+\cdots,
\end{align}
where $ d_n\left( x \right) =d_1\left( T^{n-1}\left( x \right) \right) \left(  n\geqslant 2 \right)  $ . Such a series \eqref{def:luroth expansions} is called the L\"{u}roth expansion of $ x $ and written as
\begin{align*}
	x=\left[ d_1\left( x \right) ,d_2\left( x \right) ,\cdots \right] ,
\end{align*}
where $ d_n\left( x \right) \left( n\geqslant 1 \right)  $ are called the digits of the L\"{u}roth expansion of $ x $. L\"{u}roth\cite{luroth1883ueber} showed that each rational number has an infinite periodic expansion of the form \eqref{def:luroth expansions} and every irrational number has a unique infinite expansion.
For any $ n \in \mathbb{N}  $, the $ n $-th partial sum of \eqref{def:luroth expansions}
\begin{align*}
	\frac{p_n\left( x \right)}{q_n\left( x \right)} : = &\frac{1}{d_1(x)}+\frac{1}{d_1(x)\left( d_1(x)-1 \right) d_2(x)}+\cdots  \notag \\
	&+\frac{1}{d_1(x)\left( d_1(x)-1 \right)\cdots d_{n-1}(x)\left( d_{n-1}(x)-1 \right) d_n(x)}
\end{align*}
is called the $ n $-th convergent of the  L\"{u}roth expansion of $ x $.

The metrical theory of L\"{u}roth expansions is one of the major subjects in the study of L\"{u}roth expansions. It concerns the properties of the digits for almost all $x\in(0, 1]$. For some certain sets  defined in terms of the frequency of digits in L\"{u}roth expansions, Barreira, Iommi\cite{B.I09} and Fan et al.\cite{fan2010dimension} completely determined their Hausdorff dimension. In \cite{X.S18}, Sun and Xu studied the asymptotic behaviour of the maximal run-length function in L\"{u}roth expansions, and obtained its metrical property.
The following Borel-Bernstein theorem is important for the study of the metrical
theory of Lüroth expansions.
\begin{thm}[Borel-Bernstein]
	Let $ \phi \,\,: \mathbb{N} \rightarrow \mathbb{R} ^+  $ be an arbitrary function  and
	\begin{align*}
		F_{\phi}=\Big\{ x\in \left( 0,1 \right] \,\,: d_n\left( x \right) \geqslant \phi \left( n \right) \,\,{\rm{for\ infinitely\ many}}\,\,n\in \mathbb{N} \Big\} ,
	\end{align*}
	then, $  F_{\phi}  $  has Lerbesgue measure zero if $ \sum\limits_{n=1}^{\infty}{\frac{1}{\phi \left( n \right)}} $ converges and has full Lebesgue measure otherwise.
\end{thm}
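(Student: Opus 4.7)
The plan is to identify $F_\phi$ as $\limsup_{n\to\infty} A_n$ where $A_n:=\{x\in(0,1]:d_n(x)\geq\phi(n)\}$, and then invoke the two halves of the Borel--Cantelli lemma. The two ingredients one needs are the exact Lebesgue measure of $A_n$ and the mutual independence of the digits $d_1,d_2,\ldots$ viewed as random variables on $((0,1],\mathrm{Leb})$.

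First I would verify the structure of the cylinders. Fix integers $k_1,\ldots,k_n\geq 2$ and consider
\[
[k_1,\ldots,k_n]:=\{x\in(0,1]:d_i(x)=k_i,\ 1\leq i\leq n\}.
\]
Using the series \eqref{def:luroth expansions}, any such $x$ has the form $\frac{p_n}{q_n}+\frac{T^n x}{k_1(k_1-1)\cdots k_n(k_n-1)}$, and as $T^n x$ ranges over $(0,1]$ this cylinder is an interval of length $\prod_{i=1}^n\frac{1}{k_i(k_i-1)}$. The product form of the cylinder measure simultaneously delivers two facts: each digit has the marginal distribution $\mathrm{Leb}\{d_n=k\}=\frac{1}{k(k-1)}$ for $k\geq 2$, and the digits are mutually independent under Lebesgue measure. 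Summing by telescoping,
\[
\mathrm{Leb}(A_n)=\sum_{k\geq\lceil\phi(n)\rceil}\frac{1}{k(k-1)}=\frac{1}{\lceil\phi(n)\rceil-1},
\]
which is comparable to $1/\phi(n)$ whenever $\phi(n)\geq 2$.

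With these in hand the two cases are immediate. If $\sum 1/\phi(n)<\infty$, then in particular $\phi(n)\to\infty$, the tail comparison $\mathrm{Leb}(A_n)\asymp 1/\phi(n)$ is valid, so $\sum\mathrm{Leb}(A_n)<\infty$ and the first Borel--Cantelli lemma yields $\mathrm{Leb}(F_\phi)=0$. If instead $\sum 1/\phi(n)=\infty$, I would split into two subcases: either $\phi(n)<2$ infinitely often, so that $A_n=(0,1]$ for those $n$ and $F_\phi=(0,1]$; or $\phi(n)\geq 2$ for all large $n$, in which case $\sum\mathrm{Leb}(A_n)=\infty$ and the independence of the $A_n$ lets the second Borel--Cantelli lemma conclude $\mathrm{Leb}(F_\phi)=1$.

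The substantive step is the cylinder computation and the independence it implies; once that is established the rest is a routine application of Borel--Cantelli. I therefore expect the only real obstacle to be the careful bookkeeping that identifies $[k_1,\ldots,k_n]$ as an interval of the stated length, which ultimately rests on the explicit action of the L\"{u}roth map $T$ defined in \eqref{def:Luroth map}.
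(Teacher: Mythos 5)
Your proof is correct, and it is the standard argument: the cylinder-length formula you derive is exactly the paper's Proposition \ref{proposition: lenth of In} (quoted from Galambos), the product form does give mutual independence of the digits with marginals $\mathrm{Leb}\{d_n=k\}=\frac{1}{k(k-1)}$, the telescoping sum $\sum_{k\geqslant m}\frac{1}{k(k-1)}=\frac{1}{m-1}$ is right, and the two Borel--Cantelli applications (including your separate handling of the case $\phi(n)<2$ infinitely often, where $A_n=(0,1]$) are all sound. The paper itself states this Borel--Bernstein theorem as classical background and gives no proof, so there is nothing to compare against; your write-up supplies the expected one. The only cosmetic point is that $\mathrm{Leb}(A_n)=\frac{1}{\lceil\phi(n)\rceil-1}$ should really read $\frac{1}{\max\{2,\lceil\phi(n)\rceil\}-1}$, since every digit is at least $2$; this does not affect either case of your argument.
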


%The exceptional sets of Borel-Bernstein Theorem have been widely studied.
The earliest research for exceptional sets of Borel-Bernstein Theorem in L\"{u}roth expansions was considered by $\rm{\check{S}}$al\'at \cite{salatZurMetrischenTheorie1968}. For any $ k\in \mathbb{N} $, he obtained the Hausdorff dimension of the set
\begin{align*}
M_k=\Big\{ x\in (0,1]:d_n(x)=k,n=1,2,... \Big\}.
\end{align*}

In \cite{shen2011fractional}, Shen and Fang investigated the Hausdorff dimension of the Cantor set
\begin{align*}
\Big\{ x\in \left( 0,1 \right] : d_n\left( x \right) \geqslant \varphi \left( n \right) \,\,{\rm{for\ any}} \  n\geqslant 1 \Big\}  ,
\end{align*}
where $ \varphi \left( n \right) : \mathbb{N} \rightarrow \mathbb{R} ^+ $ satisfying $ \lim\limits_{n\rightarrow \infty }\varphi \left( n \right) =\infty $.  Furthermore,  Shen\cite{shenHausdorffDimensionSet2017} studied the size of the set
\begin{align*}
\Big\{ x\in \left( 0,1 \right] \,:\, d_n\left( x \right) \geqslant \varphi \left( n \right) \,\,{\rm{for\ infinitely\ many}}\, n \in \mathbb{N}\Big\} 	
\end{align*} in the sense of the Hausdorff dimension.

 %Hence the asymptotic behavior and statistical properties related to the L\"{u}roth digit sequence $ \left\{ d_n\left( x \right) \right\} _{n\geqslant 1} $ have aroused researcher's interest.

The L\"{u}roth expansion has a tight connection with classical Diophantine approximation,
which focus on the question of how well a real number can be approximated
by rational numbers. For instance,  Cao, Wu, Zhang\cite{cao2013efficiency} proved that the Jarn\'{\i}k-like set

$$\mathcal {J}(\beta):=\Big\{x\in(0, 1]: \Big|x-\frac{p_n (x)}{q_n (x)}\Big|<\frac{1}{q_n (x)^{(\beta+1)}}\ {\rm for\ infintely\ many}\ n\in\N\Big\} $$
and the set
\begin{align*}
\widetilde{\mathcal{J}}(\beta):=\Big\{x\in(0, 1] : \limsup\limits_{n\rightarrow\infty}\frac{\log d_n (x) d_{n+1}(x)}{\log q_n (x)}\ge\beta\;\Big\}
\end{align*}
share the Hausdorff dimension when $\beta\ge0$. That is, $ {\rm dim_{H}}\,\mathcal{J}(\beta)={\rm dim_{H}}\,\widetilde{\mathcal{J}}(\beta)=\frac{1}{\beta+1}  $, where ${\rm dim_{H}}$ denotes the Hausdorff dimension. Furthermore, Tan and Zhou\cite{tan2021dimension} studied the metircal properties of the sets

$$\mathcal{J}(\tau):=\Big\{x\in (0, 1]:\Big|x-\frac{p_n (x)}{q_{n} (x)}\Big|<\frac{1}{q_n (x)^{(\beta+1)}}\ {\rm for}\ n\in\N\ {\rm ultimately}\Big\} $$
and
\begin{align*}
	\widetilde{\mathcal{J}}'(\tau ):=\Big\{x\in(0, 1]:\lim\limits_{n\rightarrow\infty}\frac{\log d_n (x) d_{n+1}(x)}{\log q_n (x)}=\tau \Big\}.
\end{align*}
More precisely, they proved that ${\rm dim_{H}}\,\mathcal{J}'(\tau)={\rm dim_{H}}\,\widetilde{\mathcal{J}}'(\tau)$ when $ \tau \ge 0$. And the dimension admits a dichotomy : it is either $ \frac{1}{\tau +2} $ or $1$  according to $ \tau >0 $ or $ \tau  =0 $.

In this paper, we continue the study of exceptional sets of Borel-Bernstein Theorem in L\"{u}roth expansions. Firstly, we investigate the Hausdorff dimension of the set
\begin{align*}
E_{\mathrm{sup}}\left( \psi \right)  =\Big\{ x\in \left( 0,1 \right] :\limsup_{n\rightarrow \infty}\frac{\log d_n\left( x \right)}{\psi \left( n \right)}=1 \Big\} 	.
\end{align*}
%---------------------------------Theoream1.1---
\begin{thm} \label{thm1.1}
	Let $ \psi :\mathbb{N} \rightarrow \mathbb{R} ^+ $ be an arbitrary function satisfying $ \lim\limits_{n\rightarrow \infty} \psi \left( n \right) =\infty   $ ,
	we have \par
	$ ( \mathrm{i} )  $ if $ \lim\limits_{n\rightarrow \infty} \frac{\psi \left( n \right)}{n}=0 $, then $ \mathrm{dim}_{\mathrm{H}}\,E_{\mathrm{sup}}\left( \psi \right)  =1 $. \par
	$ ( \mathrm{ii} )  $ if $ \underset{n\rightarrow \infty}{\lim  \mathrm{inf}}\frac{\psi \left( n \right)}{n} = \alpha \left( 0<\alpha <\infty \right)  $  ,
	then $ \mathrm{dim_H}\,E_{\mathrm{sup}}\left( \psi \right)  =H\left( \alpha \right)  $, where
	\begin{align*}
		H\left( \alpha \right) =\mathrm{dim_H}\big\{ x\in \left( 0,1 \right] \,\,: d_n\left( x \right) \geqslant \mathrm{e}^{\alpha n}\,\,{\rm for\ infinitely\ many}\,\,n\in \mathbb{N} \big\} .	
	\end{align*}
	\par
	%As a futher result, if $ \underset{n\rightarrow \infty}{\lim  \mathrm{inf}}\frac{\psi \left( n \right)}{n}=\alpha \left( 0<\alpha <\infty \right)  $, then $ \mathrm{dim_H}\,E_{\mathrm{sup}}\left( \psi \right)  =H\left( \alpha \right) . $
	$ ( \mathrm{iii} )  $ if $ \lim\limits_{n\rightarrow \infty} \frac{\psi \left( n \right)}{n}=\infty $,
	then $ \mathrm{dim_H}\,E_{\mathrm{sup}}\left( \psi \right)  =\frac{1}{1+{A}} $, where A is given by \begin{align*}
		\log A :=\liminf \limits_{n\rightarrow \infty}\frac{\log \psi \left( n \right)}{n}.
	\end{align*}	
\end{thm}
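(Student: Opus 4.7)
The strategy is to sandwich $E_{\sup}(\psi)$ between sets of the form $\{x : d_n(x) \geq \phi(n) \text{ i.o.}\}$, whose Hausdorff dimensions are given by the results of Shen cited in the introduction, and then let the sandwiching parameters tend to zero. For every $\varepsilon > 0$ put
\begin{align*}
F_\varepsilon &= \Big\{x \in (0,1] : d_n(x) \geq e^{(1-\varepsilon)\psi(n)} \text{ for infinitely many } n\Big\},\\
G_\varepsilon &= \Big\{x \in (0,1] : d_n(x) \geq e^{(1+\varepsilon)\psi(n)} \text{ for infinitely many } n\Big\}.
\end{align*}
A routine $\limsup$ analysis gives $E_{\sup}(\psi) = \bigcap_{\varepsilon > 0}(F_\varepsilon \setminus G_\varepsilon)$, so $\dim_{\mathrm H} E_{\sup}(\psi) \leq \inf_{\varepsilon > 0} \dim_{\mathrm H} F_\varepsilon$.

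\textbf{Upper bounds.} In case (i) this is trivial. In case (ii), given any $\delta > 0$, the hypothesis $\liminf \psi(n)/n = \alpha$ implies $\psi(n) > (\alpha - \delta)n$ eventually, hence $F_\varepsilon$ is contained (up to finitely many indices) in $\{d_n \geq e^{(1-\varepsilon)(\alpha-\delta)n} \text{ i.o.}\}$, whose Hausdorff dimension is $H((1-\varepsilon)(\alpha - \delta))$. Letting $\varepsilon, \delta \to 0^+$ and using a continuity property of $\alpha \mapsto H(\alpha)$ (readable off the Moran formula for these sets) yields the bound $H(\alpha)$. In case (iii), $\log A = \liminf \log \psi(n)/n$ gives $\psi(n) \geq (A - \eta)^n$ eventually for every $\eta > 0$, so $F_\varepsilon \subset \{d_n \geq e^{(1-\varepsilon)(A-\eta)^n} \text{ i.o.}\}$, which by Shen's theorem has dimension $\tfrac{1}{1 + A - \eta}$; we let $\eta \to 0^+$.

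\textbf{Lower bounds.} For each case we build a Cantor-like subset of $E_{\sup}(\psi)$ realizing the claimed dimension. In (i) fix $M \geq 2$ and a sparse sequence $n_1 < n_2 < \cdots$ (for instance $n_k = 2^k$), and consider the Moran set $E_M$ of $x$ with $d_n(x) \in \{2,\ldots,M\}$ for $n \notin \{n_k\}$ and $d_{n_k}(x) = \lceil e^{\psi(n_k)}\rceil$. Since $\psi(n_k)/n_k \to 0$, the prescribed digits contribute a vanishing correction and a mass-distribution argument gives $\dim_{\mathrm H} E_M \to \dim_{\mathrm H}\{x : d_n(x) \in \{2,\ldots,M\} \text{ for all } n\}$, which tends to $1$ as $M \to \infty$. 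In (ii), extract a subsequence $m_k$ with $\psi(m_k)/m_k \to \alpha$, prescribe $d_{m_k}(x) = \lceil e^{\psi(m_k)} \rceil$, and allow the remaining digits to range over a Moran structure mimicking the extremal construction used to compute $H(\alpha)$; one checks that $\limsup \log d_n(x)/\psi(n) = 1$ along $(m_k)$ while staying bounded elsewhere. In (iii), proceed analogously using $m_k$ with $\log \psi(m_k)/m_k \to \log A$, paralleling the lower-bound construction in Shen's super-exponential regime.

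\textbf{Main obstacle.} The critical difficulty is the lower-bound construction in case (ii): one must exhibit a Cantor subset of $E_{\sup}(\psi)$ whose Hausdorff dimension attains $H(\alpha)$ while simultaneously forcing $\limsup \log d_n/\psi(n) = 1$ (neither strictly greater nor less) pointwise. Calibrating a measure whose local dimension matches $H(\alpha)$ while respecting both the prescribed-digit constraints at the $m_k$ and the upper-growth restriction on all other indices requires a delicate balance of cylinder-length contributions, made subtle by the fact that $\psi(n)/n$ only fluctuates down to $\alpha$ without converging. A subsidiary issue is establishing the continuity of $\alpha \mapsto H(\alpha)$ used to close the upper bound; this we anticipate handling through a direct estimate on the associated dimension equation rather than invoking an external result.
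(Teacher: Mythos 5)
Your overall architecture coincides with the paper's: the upper bounds are obtained by including $E_{\mathrm{sup}}\left( \psi \right)$ into limsup sets of the form $\{x: d_n(x)\geqslant \mathrm{e}^{(1-\varepsilon)\psi(n)}\ \text{i.o.}\}$ and invoking Shen's dimension formulas together with continuity of $H(\alpha)$, and the lower bounds come from Cantor subsets with large digits prescribed along a subsequence and bounded digits elsewhere. Cases (i) and (ii) are essentially the paper's argument: in (i) the paper realizes your ``vanishing correction'' via a digit-deletion map that is $(1+\epsilon)^{-1}$-H\"older onto $E_M$ (Lemma \ref{lem In<In-}, Lemma \ref{lem: 3.3} and Proposition \ref{lem:CSofHD}) rather than a direct mass-distribution estimate, and in (ii) the lower bound for the hybrid set $F_M(\mathrm{e}^{\alpha})$ is imported from the proof of Shen's Theorem 3.2; the obstacle you single out in (ii) is in fact handled routinely, since off the sparse subsequence the digits are bounded and $\psi(n)\to\infty$ forces $\log d_n(x)/\psi(n)\to 0$ there.

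The genuine gap is the lower bound in case (iii). You propose to ``proceed analogously'' to case (ii), prescribing $d_{m_k}\approx \mathrm{e}^{\psi(m_k)}$ along a sparse subsequence with $\log\psi(m_k)/m_k\to\log A$ and keeping the remaining digits bounded. That construction fails in the super-exponential regime: a cylinder of order $m_k$ in such a set has length at most $\mathrm{e}^{-2\psi(m_k)}$ while the number of such cylinders is only about $(M-1)^{m_k}$, and since $\psi(m_k)/m_k\to\infty$ the natural covering shows the resulting set has Hausdorff dimension $0$, not $\frac{1}{1+A}$. To reach $\frac{1}{1+A}$ the digits must grow at \emph{every} index, roughly like $\mathrm{e}^{A^n}$; but then one must also guarantee $d_n(x)\leqslant \mathrm{e}^{\psi(n)}$ for all large $n$ so that $\limsup_n\log d_n(x)/\psi(n)\leqslant 1$, which is delicate because $\liminf_n\log\psi(n)/n=\log A$ only controls $\psi$ along a subsequence and $\psi(n)$ may be far larger elsewhere. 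The paper resolves exactly this tension by setting $\theta(n)=\min_{k\geqslant n}\psi(k)$ and
\begin{align*}
r_1=\mathrm{e}^{\theta(1)},\qquad r_n=\min \Big\{ \mathrm{e}^{\theta(n)},\ \prod_{i=1}^{n-1}r_i^{\,A-1+\epsilon} \Big\},
\end{align*}
following Fang--Ma--Song, which simultaneously yields $\limsup_n\log r_n/\psi(n)=1$ and $\limsup_n\frac{\log r_{n+1}}{\log(r_1\cdots r_n)}\leqslant A-1+\epsilon$, and then applies the Moran-type formula of Lemma \ref{lem:key lem} to the set $M\lfloor r_n+1\rfloor\leqslant d_n(x)\leqslant 2M\lfloor r_n+1\rfloor-1$. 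This construction, rather than anything in case (ii), is the real crux of the theorem, and your proposal as written does not contain the idea needed for it.
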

%In this context, we use the symbol $ \mathrm{dim_H}\, $ to denote the Hausdorff dimension.
We also consider the Hausdorff demension of the sets
\begin{align*}
	E\left( \psi \right) =\Big\{ x\in \left( 0,1 \right] :\lim_{n\rightarrow \infty}\frac{\log d_n\left( x \right)}{\psi \left( n \right)}=1 \Big\} 	
\end{align*}
and
\begin{align*}
		E_{\mathrm{inf}}\left( \psi \right) =\Big\{ x\in \left( 0,1 \right] :
\liminf_{n\rightarrow \infty}\frac{\log d_n\left( x \right)}{\psi \left( n \right)}=1 \Big\} .
\end{align*}

%--------------------------------------------------------------------
% \subsection{Notations} % (fold)
% \label{sub:notations}

% % subsection notations (end)
%------------------------Theoream 1.2-----------------
\begin{thm} \label{Thm 1.2}
	Let $ \psi :\mathbb{N} \rightarrow \mathbb{R} ^+ $ be an arbitrary function satisfying $ \lim\limits_{n\rightarrow \infty} \psi \left( n \right) =\infty   ,$
    then
	\begin{align*}
		\mathrm{dim_H}E\left( \psi \right) =\frac{1}{2+\eta},
	\end{align*}
	where $ \eta $ is defined as
	\begin{align*}
		\eta :=\limsup \limits_{n\rightarrow \infty}\frac{\psi \left( n+1 \right)}{\psi \left( 1 \right) +\cdots +\psi \left( n \right)}.
	\end{align*}
\end{thm}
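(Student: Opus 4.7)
The plan is to prove $\mathrm{dim_H}\, E(\psi)=\tfrac{1}{2+\eta}$ by matching upper and lower bounds, relying on the L\"uroth cylinder length $|I_n(d_1,\ldots,d_n)| = \prod_{i=1}^n \frac{1}{d_i(d_i-1)}$ and the fact that every $x\in E(\psi)$ satisfies $d_n(x) = e^{(1+o(1))\psi(n)}$. Write $S_n = \psi(1)+\cdots+\psi(n)$.

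For the upper bound, fix $\epsilon>0$ and decompose $E(\psi)\subseteq\bigcup_N E_N^\epsilon$ where $E_N^\epsilon = \{x : d_n(x)\in[a_n,b_n]\ \forall n\ge N\}$ with $a_n=e^{(1-\epsilon)\psi(n)}$ and $b_n=e^{(1+\epsilon)\psi(n)}$. For each initial word $(d_1^*,\ldots,d_{N-1}^*)$, I would cover $E_N^\epsilon\cap I_{N-1}(d_1^*,\ldots,d_{N-1}^*)$ by ``fat'' level-$n$ cylinders $J_n(\vec d) = \bigcup_{d_{n+1}\ge a_{n+1}} I_{n+1}(\vec d,d_{n+1})$, each of which is a single interval of length $\asymp |I_n(\vec d)|/a_{n+1}$. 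Using the estimate $\sum_{d\in[a,b]}(d(d-1))^{-s}\lesssim\max(a^{1-2s},b^{1-2s})$, the $s$-power sum of this cover at level $n$ is bounded by a subexponential-in-$n$ factor times $\exp\bigl((1\pm\epsilon)(1-2s)S_n - s(1-\epsilon)\psi(n+1)\bigr)$. Along a subsequence $(n_k)$ realizing $\psi(n_k+1)/S_{n_k}\to\eta$, this exponent equals $S_{n_k}\bigl[1-s(2+\eta)+O(\epsilon)\bigr]$; for $s>\tfrac{1}{2+\eta}$ and $\epsilon$ small the bracket is negative, and since $\psi(n)\to\infty$ forces $S_{n_k}/n_k\to\infty$, the main term dominates the subexponential factor and the sum tends to $0$. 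Taking countable unions over $N$, the initial words, and $\epsilon\downarrow 0$ yields $H^s(E(\psi))=0$ for all $s>\tfrac{1}{2+\eta}$.

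For the lower bound, construct a Cantor subset $F\subseteq E(\psi)$ by allowing $d_n(x)\in\mathcal{A}_n := \{M_n,M_n+1,\ldots,2M_n-1\}$ with $M_n=\lceil e^{\psi(n)}\rceil$; every $x\in F$ has $\log d_n(x)-\psi(n)=O(1)$, hence $\log d_n(x)/\psi(n)\to 1$. Equip $F$ with the product probability measure $\mu$ that assigns mass $\prod_{i=1}^n|\mathcal{A}_i|^{-1}$ to each admissible $n$-cylinder, and apply the mass distribution principle by bounding below the local dimension $\underline d_\mu(x)$ for every $x\in F$. The crucial geometric fact is that within each admissible cylinder $I_n(\vec d)$, the union $\bigcup_{d_{n+1}\in\mathcal{A}_{n+1}}I_{n+1}(\vec d,d_{n+1})$ forms a single sub-interval of length $\asymp |I_n|/M_{n+1}\asymp e^{-2S_n-\psi(n+1)}$ carrying all of the $\mu$-mass $\asymp e^{-S_n}$ of $I_n$. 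At this scale $r=|I_n|/M_{n+1}$ the ratio $\log\mu(B(x,r))/\log r$ equals $1/(2+\psi(n+1)/S_n)$, whose $\liminf$ over $n$ is precisely $1/(2+\eta)$.

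The main obstacle is to verify that this worst-case ratio governs every intermediate scale, not merely the privileged scales $|I_n|/M_{n+1}$. In the regime $|I_{n+1}|\le r\le|I_n|/M_{n+1}$, the ball $B(x,r)$ collects about $r/|I_{n+1}|$ consecutive $(n+1)$-cylinders of the in-$F$ subinterval, so $\mu(B(x,r))\asymp(r/|I_{n+1}|)\mu(I_{n+1})$; writing $L=\log(r/|I_{n+1}|)\in[0,\psi(n+1)]$ reduces the local-dimension ratio to $(S_{n+1}-L)/(2S_{n+1}-L)$, a monotone-decreasing function of $L$ whose minimum $1/(2+\psi(n+1)/S_n)$ is attained exactly at the right endpoint $r=|I_n|/M_{n+1}$. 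For $r$ in the next regime $|I_n|/M_{n+1}<r\le|I_{n-1}|/M_n$, $\mu(B)$ remains $\asymp\mu(I_n)$ until the ball crosses into adjacent in-$F$ cylinders at level $n-1$, and the analogous analysis shifts the index to produce the bound $1/(2+\psi(n)/S_{n-1})$, again $\ge 1/(2+\eta)$ in the limit. Assembling these estimates yields $\underline d_\mu(x)\ge \tfrac{1}{2+\eta}$ for every $x\in F$, so $\mathrm{dim_H}\, F\ge\tfrac{1}{2+\eta}$ and hence $\mathrm{dim_H}\, E(\psi)\ge\tfrac{1}{2+\eta}$, completing the proof.
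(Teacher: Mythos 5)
Your proposal is correct and follows essentially the same route as the paper: the upper bound uses the same decomposition into sets where $e^{(1-\epsilon)\psi(n)}\le d_n(x)\le e^{(1+\epsilon)\psi(n)}$ for all $n\ge N$ together with the same fat-cylinder covers $J_n$ (your direct $s$-power-sum estimate is equivalent to the paper's lower box-dimension count), while the lower bound uses the same Cantor subset with $d_n(x)\in[M_n,2M_n)$, $M_n\asymp e^{\psi(n)}$. The only difference is that the paper obtains the dimension of this Cantor set by citing a ready-made lemma of Arroyo and Gonz\'alez Robert (Lemma 2.8), whereas you re-derive it via the mass distribution principle; your scale-by-scale analysis of $\mu(B(x,r))$ is precisely the standard proof of that lemma.
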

%------------------------Theoream 1.3-----------------
\begin{thm} \label{Thm 1.3}
	Let $ \psi :\mathbb{N} \rightarrow \mathbb{R} ^+ $ be an arbitrary function satisfying $ \lim\limits_{n\rightarrow \infty} \psi \left( n \right) =\infty   ,$
	then
	\begin{align*}
		\mathrm{dim_H}\,E_{\mathrm{inf}}\left( \psi \right) =\frac{1}{1+V},
	\end{align*}
	where $ V $ is given by
	\begin{align*}
		\log V:=
\limsup\limits_{n\rightarrow \infty}\frac{\log \psi \left( n \right)}{n}.
	\end{align*}
\end{thm}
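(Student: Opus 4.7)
The plan is to establish $\dim_H E_{\inf}(\psi)=1/(1+V)$ via matching upper and lower bounds, reducing the problem to the "eventually-large-digits" exceptional sets studied by Shen and Fang.

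For the \emph{upper bound}, I would note that every $x\in E_{\inf}(\psi)$ satisfies $d_n(x)\geq e^{(1-\epsilon)\psi(n)}$ for all sufficiently large $n$ (for any fixed $\epsilon>0$), giving
\begin{equation*}
E_{\inf}(\psi)\subseteq\bigcup_{N\geq 1}\bigcap_{n\geq N}\Big\{x\in(0,1]:d_n(x)\geq e^{(1-\epsilon)\psi(n)}\Big\}.
\end{equation*}
Because Hausdorff dimension is stable under countable unions and under finite shifts of the starting index, it suffices to bound the dimension of the "for all $n$" set $\{x:d_n(x)\geq e^{(1-\epsilon)\psi(n)}\ \forall n\}$. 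Invoking the Shen-Fang theorem \cite{shen2011fractional}, the relevant limsup parameter is unchanged by the $(1-\epsilon)$ factor (which contributes only $O(1)/n\to 0$ inside the outer logarithm), so the bound is $1/(1+V)$. Sending $\epsilon\to 0$ yields $\dim_H E_{\inf}(\psi)\leq 1/(1+V)$.

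For the \emph{lower bound}, the plan is to construct a Cantor-type subset $F\subseteq E_{\inf}(\psi)$ with $\dim_H F\geq 1/(1+V)-\delta$ for arbitrary $\delta>0$. Fix $\delta>0$ and select a sparse subsequence $(n_k)$ along which $\log\psi(n_k)/n_k\to\log V$, with $n_{k+1}-n_k\to\infty$ fast enough that restrictions at the $n_k$ have no asymptotic effect on the dimension. Define $F$ by prescribing the allowed values of $d_n$ level-by-level: at $n=n_k$, fix $d_{n_k}$ to be the single integer nearest $e^{\psi(n_k)}$, which forces $\log d_{n_k}/\psi(n_k)\to 1$ and hence $\liminf\leq 1$; at $n\notin\{n_k\}$, allow $d_n$ to range over an interval of the form $[e^{(1-\delta)\psi(n)},M_n]$ with $M_n$ chosen to mirror the extremal construction used in the Shen-Fang argument for the eventually-large-digits set. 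Every $x\in F$ then satisfies $\log d_n(x)/\psi(n)\geq 1-\delta$ eventually and $\log d_{n_k}(x)/\psi(n_k)\to 1$, so $F\subseteq E_{\inf}(\psi)$. Equipping $F$ with the Bernoulli-type measure $\mu$ uniform over the allowed digit choices at each level, the mass distribution principle should deliver the required lower bound.

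The delicate step will be verifying the mass distribution estimate $\mu(B(x,r))\leq Cr^{s}$ at $s=1/(1+V)-\delta$. Two critical scales have to be handled: the level-$n$ cylinder scale $L_n$ and the "cluster" scale $L_{n-1}/K_n$ inside a parent cylinder of level $n-1$, where $K_n$ is the number of allowed $d_n$-values. At the cluster scale the inequality boils down to controlling the exponent $(2s-1)\sum_{i<n}\log K_i+s\log K_n$ as $n\to\infty$, which is precisely the Shen-Fang calculation and which stabilises at $s=1/(1+V)$ along the subsequence $(n_k)$ realising $\log V$. The real technical point is balancing the sparsity of $(n_k)$ against the freedom in the intermediate ranges $[e^{(1-\delta)\psi(n)},M_n]$: the forced single-value choices at $n_k$ must not deflate the dimension, yet they must occur often enough to pin $\liminf=1$ rather than merely $\liminf\geq 1-\delta$. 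This liminf-specific requirement is what distinguishes the argument from the pure eventually-large-digits setting of \cite{shen2011fractional}.
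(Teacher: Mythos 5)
Your upper bound follows essentially the same route as the paper: cover $E_{\mathrm{inf}}(\psi)$ by the countable family of ``all digits eventually large'' sets $\bigcap_{n\geqslant N}\{d_n(x)\geqslant e^{(1-\epsilon)\psi(n)}\}$, use countable stability, and invoke the Shen--Fang dimension results from \cite{shen2011fractional}; this is fine modulo the routine verification that shifting the starting index does not change the dimension (the paper handles the same point by passing to infinitely-often sets).

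The lower bound, however, has a genuine gap: the Cantor set $F$ you construct is not contained in $E_{\mathrm{inf}}(\psi)$. At positions $n\notin\{n_k\}$ you allow $d_n$ to be as small as $e^{(1-\delta)\psi(n)}$, so for $x\in F$ you only obtain $1-\delta\leqslant\liminf_{n}\log d_n(x)/\psi(n)\leqslant 1$; any $x$ whose digits sit near the lower endpoint of the window infinitely often has $\liminf$ equal to $1-\delta$, and such points carry essentially all of the mass of your Bernoulli measure. You flag this tension yourself (``must occur often enough to pin $\liminf=1$''), but adjusting the sparsity of $(n_k)$ cannot repair it, because the defect lives at the non-special positions. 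To force $\liminf\geqslant 1$ the lower endpoint $u_n$ of \emph{every} digit window must be at least $e^{\psi(n)}$, and then one must simultaneously arrange $\liminf_n(\log u_n)/\psi(n)=1$ (so that $\liminf\leqslant 1$) and $\limsup_n\log u_{n+1}/\log(u_1\cdots u_n)\leqslant V-1+\epsilon$ (so that the dimension is at least $1/(1+V+\epsilon)$). Taking $u_n=e^{\psi(n)}$ outright fails the second condition for irregular $\psi$, e.g.\ when $\psi$ is enormous on a sparse set of indices and small elsewhere. This is exactly what the paper's regularized sequence $L_n(\epsilon)=\sup_{j\geqslant n}e^{\psi(j)(V+\epsilon)^{n-j}}$ is engineered to achieve: it dominates $e^{\psi(n)}$ everywhere, touches it in the liminf sense, and has consecutive ratios controlled by $V+\epsilon$, after which Lemma~\ref{lem:key lem} delivers the lower bound with no separate mass-distribution computation. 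Your proposal contains no substitute for this regularization step, and without it the argument does not produce a subset of $E_{\mathrm{inf}}(\psi)$ of the right dimension.
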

 %In this paper, we consider the Borel-Bernstein theory of the set $A(\phi)$ and we generalize the results of Wang and Wu in \cite{WangWu} about regular continued fractions.
 %Therefore, we also have interest in the Borel-Bernstein theory of the set $\big\{x\in[0, 1)\colon~ a_{n}(x)\geq\phi(n) ~ \text{for}~i.m.~ n\in\mathbb{N} \big\}$.
%Theorem \ref{t1} motivates the calculation of the Hausdorff dimension of Lebesgue null set $A(\phi)$ when $\sum\frac{1}{\phi(n)}$ converges.
%In this paper, we generalize the results of Wang and Wu in \cite{WangWu} about regular continued fractions.
The paper is organized as follows. In the next section, we present some elementary properties of L\"{u}roth expansions and some auxiliary lemmas. Section 3 is devoted to showing the proof of Theorem \ref{thm1.1}. In Section 4, we prove Theorem \ref{Thm 1.2} and Theorem \ref{Thm 1.3}.

%The setting of exceptional sets of Borel-Bernstein Theorem in regular continued fraction expansions was considered by Fang et al.\cite{lulufangExceptionalSetsBorel2021}.
%We adopt the approach as in \cite{lulufangExceptionalSetsBorel2021} to provide the proofs of our results.

\section{Preliminaries}
In this section, we racall some elementary properties of L\"{u}roth expansions and some useful lemmas that will be used later. \par
%we recommend the papers
%of Burger et al. \cite{Burger}, Dajani \cite{Dajani,DajaniKraaikamp} and Lascu \cite{Lascu,Lascu1}.
\begin{dfn}
\label{dfn:admissible}
A subsequence $ \left\{ d_n \right\} _{n\geqslant 1} $ of  $ \mathbb{N}  $ is said to be admissible  if there exists $ x\in \left( 0,1 \right]  $ such that
\begin{align*}
d_n\left( x \right) =d_n \, \,\mathrm{for} \,\,  n\geqslant 1.
\end{align*}
\end{dfn}
\begin{lem}[See \cite{galambosRepresentationsRealNumbers1976ar}]
\label{lem:admissible}
A sequence $ \left\{ d_n \right\} _{n\geqslant 1} $ of positive integers is
admissible if and only if
\begin{align*}
	d_n \geqslant 2   \,\, \mathrm{for} \,\,  n\geqslant 1.
\end{align*}
\end{lem}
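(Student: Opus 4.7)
My plan is to prove both directions of the equivalence separately.

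For the necessity (``only if''), the base case is immediate: any $x \in (0,1]$ satisfies $1/x \geq 1$, so $d_1(x) = \lfloor 1/x \rfloor + 1 \geq 2$. To propagate this to all $n$, I will verify that the Lüroth map $T$ preserves $(0,1]$. If $d_1(x) = k$, then $x \in (1/k, 1/(k-1)]$, so $x - 1/k \in (0, 1/(k(k-1))]$, whence $T(x) = k(k-1)(x - 1/k) \in (0,1]$. Iterating gives $T^{n-1}(x) \in (0,1]$, and applying the base case to $T^{n-1}(x)$ yields $d_n(x) = d_1(T^{n-1}(x)) \geq 2$ for every $n \geq 1$.

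For the sufficiency (``if''), given a sequence $\{d_n\}_{n \geq 1}$ of positive integers with $d_n \geq 2$, I will construct the candidate
\[
x := \sum_{n=1}^{\infty} \frac{1}{d_1(d_1-1) \cdots d_{n-1}(d_{n-1}-1)\, d_n},
\]
and prove by induction on $n$ that $d_n(x) = d_n$. Convergence follows from the geometric comparison: consecutive ratios equal $1/((d_n-1) d_{n+1}) \leq 1/2$. The induction is driven by the telescoping identity
\[
T(x) = d_1(d_1 - 1)\Bigl(x - \frac{1}{d_1}\Bigr) = \sum_{n=1}^{\infty} \frac{1}{d_2(d_2-1) \cdots d_n(d_n-1)\, d_{n+1}},
\]
which exhibits $T(x)$ as the analogous series for the shifted sequence $(d_2, d_3, \ldots)$; hence once the base case $d_1(x) = d_1$ is settled, applying it iteratively to $T^{n-1}(x)$ delivers $d_n(x) = d_n$ for all $n$.

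The main obstacle is therefore the base case $d_1(x) = d_1$, which amounts to showing $x \in (1/d_1, 1/(d_1-1)]$. The lower bound is immediate: the first summand of the series is $1/d_1$ and the remaining terms are strictly positive. For the upper bound, I isolate the shifted tail
\[
y := \sum_{n=1}^{\infty} \frac{1}{d_2(d_2-1) \cdots d_n(d_n-1)\, d_{n+1}},
\]
so that $x = 1/d_1 + y/(d_1(d_1-1))$. Applying the same geometric comparison to $y$ (first term $1/d_2$, subsequent ratios at most $1/2$) yields $y \leq 2/d_2 \leq 1$ thanks to $d_2 \geq 2$. Substituting back gives $x \leq 1/d_1 + 1/(d_1(d_1-1)) = 1/(d_1-1)$, closing the base case. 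The remaining verifications are routine, with the boundary case $d_n \equiv 2$ corresponding exactly to $x = 1$.
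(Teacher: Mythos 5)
Your proof is correct. The paper does not prove this lemma itself but simply cites Galambos's lecture notes, so there is no in-paper argument to compare against; your two directions (the map $T$ preserves $(0,1]$ and $d_1(x)=\lfloor 1/x\rfloor+1\geqslant 2$ there, plus the explicit series construction with the interval check $x\in(1/d_1,1/(d_1-1)]$ and the shift identity $T(x)=y$) constitute the standard, complete argument, and the boundary analysis ($y\leqslant 2/d_2\leqslant 1$, with equality exactly when all digits equal $2$) correctly handles the closed right endpoint of the cylinder.
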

For $n\geq1$, we denote by $ \Sigma _n $ the collection of all admissible words of length $ n $, i.e.,
\begin{align*}
	\Sigma _n=\big\{ \left( d_1,\cdots ,d_n \right) :d_j\geqslant 2 \,\, \mathrm{for} \,\, 1\leqslant j\leqslant n \big\} .
\end{align*}
\begin{dfn}
\label{dfn:cylinder set}
For any $ n\geqslant 1 $ and $ \left( d_1,\cdots ,d_n \right) \in \Sigma _n $, define
\begin{align*}
	I(d_1,\cdots ,d_n)=\big\{x\in \left( 0,1 \right]:d_1\left( x \right) =d_1,\ldots, d_n(x)=d_n \big\}
\end{align*}
and call it a $ n $-th order cylinder of L\"{u}roth expansion.
\end{dfn}
\begin{pro}[See \cite{galambosRepresentationsRealNumbers1976ar}]
\label{proposition: lenth of In}
	The $ n $-th order  cylinder $ I(d_1,\cdots ,d_n) $ is the interval with the left endpoint
	\begin{align*}
	\frac{1}{d_1}+\frac{1}{d_1(d_1-1)d_2}+\cdots +\prod_{k=1}^{n-1}{\frac{1}{d_k(d_k-1)}}\frac{1}{d_n}	
	\end{align*} and right endpoint
   \begin{align*}
   	\frac{1}{d_1}+\frac{1}{d_1(d_1-1)d_2}+\cdots +\prod_{k=1}^{n-1}{\frac{1}{d_k(d_k-1)}}\frac{1}{d_n}+\prod_{k=1}^n{\frac{1}{d_k(d_k-1)}}.
   \end{align*}
	Futhermore, one has
	\begin{align} \label{equation lenth of In}
		\big| I(d_1,\cdots ,d_n) \big|=\prod_{k=1}^n{\frac{1}{d_k(d_k-1)}}.
	\end{align}
In this context, we use the symbol $ ``| \cdot |" $ to denote the length of an interval.
\end{pro}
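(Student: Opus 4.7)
The plan is to prove the formula by induction on $n$, exploiting the fact that the L\"uroth map $T$ is piecewise affine on the partition $\{(1/d, 1/(d-1)]\}_{d\geqslant 2}$ of $(0,1]$.

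First I would handle the base case $n=1$. From the definition $d_1(x) = \lfloor 1/x\rfloor + 1$, the condition $d_1(x) = d_1$ is equivalent to $d_1 - 1 \leqslant 1/x < d_1$, i.e. $x \in (1/d_1, 1/(d_1-1)]$. Hence $I(d_1)$ is the interval with left endpoint $1/d_1$, right endpoint $1/d_1 + 1/(d_1(d_1-1))$, and length $1/(d_1(d_1-1))$, matching the proposition.

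Next I would record the affine structure of $T$ on $I(d_1)$. By \eqref{def:Luroth map}, for $x \in I(d_1)$ we have $T(x) = d_1(d_1-1)(x - 1/d_1)$, so $T$ is an affine bijection from $I(d_1)$ onto $(0,1]$ of slope $d_1(d_1-1)$, with inverse branch
\begin{equation*}
\phi_{d_1} : (0,1] \to I(d_1), \qquad \phi_{d_1}(y) = \frac{1}{d_1} + \frac{y}{d_1(d_1-1)}.
\end{equation*}
Since $d_k(x) = d_1(T^{k-1}x)$, one obtains the recursive identity
\begin{equation*}
I(d_1, d_2, \ldots, d_n) = \phi_{d_1}\bigl(I(d_2, \ldots, d_n)\bigr).
\end{equation*}

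For the inductive step, assume $I(d_2, \ldots, d_n)$ is an interval with the left endpoint, right endpoint, and length given by the analogous formulas. Because $\phi_{d_1}$ is affine and increasing, it maps intervals to intervals, so $I(d_1, \ldots, d_n)$ is an interval whose length is scaled by $1/(d_1(d_1-1))$, giving $\prod_{k=1}^n 1/(d_k(d_k-1))$. Applying $\phi_{d_1}$ to the left endpoint of $I(d_2, \ldots, d_n)$ produces an outer term $1/d_1$ and scales every remaining term by $1/(d_1(d_1-1))$, which is exactly what is needed so that the indices $k$ shift by one and the telescoping products $\prod_{j=1}^{k-1} 1/(d_j(d_j-1))$ appear in the correct positions; the right endpoint is then the left endpoint plus the length.

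The main (and only) obstacle is the bookkeeping of this nested sum: one must verify that applying the affine map $\phi_{d_1}$ really does convert the inductive formula (with $k$ ranging from $2$ to $n$) into the claimed formula (with $k$ ranging from $1$ to $n$). Beyond this, the proof is completely mechanical, since the key dynamical input, that $T$ is piecewise affine with full branches on each $I(d)$, does all of the geometric work.
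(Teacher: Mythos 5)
Your argument is correct and complete. Note, however, that the paper does not prove this proposition at all: it is quoted directly from Galambos's monograph \cite{galambosRepresentationsRealNumbers1976ar}, so there is no in-paper proof to compare against. Your inductive proof via the inverse branches is the standard derivation, and the one point you flag as a potential obstacle, the bookkeeping in the inductive step, does go through: writing $L'$ for the left endpoint of $I(d_2,\dots,d_n)$ given by the inductive hypothesis, the map $\phi_{d_1}(y)=\frac{1}{d_1}+\frac{y}{d_1(d_1-1)}$ sends $L'$ to $\frac{1}{d_1}+\frac{1}{d_1(d_1-1)}L'$, and multiplying each term $\prod_{j=2}^{k-1}\frac{1}{d_j(d_j-1)}\cdot\frac{1}{d_k}$ of $L'$ by $\frac{1}{d_1(d_1-1)}$ yields exactly $\prod_{j=1}^{k-1}\frac{1}{d_j(d_j-1)}\cdot\frac{1}{d_k}$, so the claimed formula with indices starting at $1$ drops out; the length scales by the slope $\frac{1}{d_1(d_1-1)}$ and the right endpoint is the left endpoint plus the length since $\phi_{d_1}$ is increasing. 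Your base case $I(d_1)=(1/d_1,\,1/(d_1-1)]$ and the full-branch property $T(I(d_1))=(0,1]$ are also verified correctly, so the recursion $I(d_1,\dots,d_n)=\phi_{d_1}(I(d_2,\dots,d_n))$ is legitimate. The only cosmetic remark is that the proposition does not specify which endpoints belong to the cylinder, and your proof in fact gives the sharper statement that each cylinder is half-open of the form $(a,a+\ell\,]$.
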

For convinience, we write
\begin{align*}
Q_n\left( x \right): =\prod_{k=1}^n{d_k\left( x \right) (d_k\left( x \right) -1)}.	
\end{align*}
\begin{lem}
\label{lem Qn}
	For any $ x\in \left( 0,1 \right]  $, $ Q_n\left( x \right) \geqslant 2^n . $
\end{lem}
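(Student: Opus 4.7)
The plan is to derive the bound directly from the admissibility condition for Lüroth digits. By Lemma \ref{lem:admissible}, every admissible digit sequence satisfies $d_k(x) \geqslant 2$ for all $k \geqslant 1$, so in particular $d_k(x) - 1 \geqslant 1$. Consequently each factor in the product satisfies $d_k(x)\bigl(d_k(x) - 1\bigr) \geqslant 2 \cdot 1 = 2$.

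Taking the product over $k = 1, \ldots, n$ yields
\begin{align*}
Q_n(x) = \prod_{k=1}^{n} d_k(x)\bigl(d_k(x) - 1\bigr) \geqslant \prod_{k=1}^{n} 2 = 2^n,
\end{align*}
which gives the desired inequality. There is no real obstacle here: the estimate is a one-line consequence of the admissibility lemma, and the purpose of stating it separately is presumably to have a uniform lower bound on the denominators $Q_n(x)$ available for later length-of-cylinder computations in the Hausdorff dimension arguments of Sections 3 and 4.
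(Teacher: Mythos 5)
Your proof is correct and is essentially identical to the paper's: both invoke the admissibility condition $d_k(x)\geqslant 2$ to bound each factor $d_k(x)\left(d_k(x)-1\right)\geqslant 2$ and multiply over $k=1,\ldots,n$. Nothing further to note.
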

\begin{proof}
Since $ d_n\left( x \right) \geqslant 2 $ for all $ n\geqslant 1 $, then
	\begin{align*}
	Q_n\left( x \right) =\prod_{k=1}^n{d_k\left( x \right) \big(d_k\left( x \right) -1\big)}\geqslant \prod_{k=1}^n{2\cdot (2-1)}=2^n .	
	\end{align*}
\end{proof}
For any integer $ M\geqslant 2 $, we set
\begin{align*}
	E_M=\Big\{ x\in \left( 0,1 \right] :2\leqslant d_n\left( x \right) \leqslant M\,\ \textup{for\ any}\ n\in \mathbb{N} \Big\} .
\end{align*}
The following lemma gives the Hausdorff dimension of $ E_M $.
\begin{lem}[See \cite{shen2008note}]
\label{lem: the hausdorff dimension of Em}
	For the L\"{u}roth expansion, let $ S\left( M \right)  $ be the solution of the equation
	\begin{align*}
		\sum_{k=2}^M{\left( \frac{1}{k\left( k-1 \right)} \right) ^s=1},
	\end{align*}
then $ \mathrm{dim_H}\,E_M=S\left( M \right)  $.
\end{lem}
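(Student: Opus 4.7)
The plan is to realize $E_M$ as the self-similar attractor of an iterated function system and invoke the classical Moran theorem. For each $k \in \{2,\ldots,M\}$ let $\phi_k : (0,1] \to I(k)$ be the affine bijection onto the first-level cylinder; by Proposition \ref{proposition: lenth of In} its contraction ratio is $|I(k)| = \frac{1}{k(k-1)}$. By Lemma \ref{lem:admissible} and \eqref{equation lenth of In}, iterating these maps produces exactly the nested cylinders with digits in $\{2,\ldots,M\}$, and their intersection is $E_M$. Moreover the images $I(2),\ldots,I(M)$ are pairwise disjoint half-open intervals $(1/k,\,1/(k-1)]$, so the strong separation condition holds and $E_M$ is genuinely self-similar.

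For the upper bound $\mathrm{dim}_{\mathrm H}\, E_M \le S(M)$, I would use the natural cylinder cover at level $n$. By multiplicativity of cylinder lengths and the defining equation for $S(M)$,
\begin{align*}
\sum_{(d_1,\ldots,d_n)} \big| I(d_1,\ldots,d_n) \big|^{S(M)}
= \left(\sum_{k=2}^{M} \Big(\tfrac{1}{k(k-1)}\Big)^{S(M)}\right)^{\!n} = 1,
\end{align*}
where the sum ranges over admissible words with digits in $\{2,\ldots,M\}$. Since the maximal cylinder length at level $n$ is at most $2^{-n}\to 0$ by Lemma \ref{lem Qn}, this yields $\mathcal{H}^{S(M)}(E_M)\le 1$, hence the desired upper bound.

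For the matching lower bound, I would place on $E_M$ the Bernoulli measure $\mu$ giving each digit $k \in \{2,\ldots,M\}$ probability $p_k = (k(k-1))^{-S(M)}$; these weights sum to $1$ by the definition of $S(M)$, and by construction $\mu\bigl(I(d_1,\ldots,d_n)\bigr) = \bigl|I(d_1,\ldots,d_n)\bigr|^{S(M)}$ on every admissible cylinder. The main technical step, and the principal obstacle, is to promote this identity on cylinders to a uniform estimate $\mu(B) \le C\,|B|^{S(M)}$ for arbitrary small intervals $B \subset (0,1]$. Given such a $B$, one chooses the largest $n$ for which $B$ is contained in a single $n$-th order cylinder and then counts how many $(n+1)$-th order cylinders can intersect $B$. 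The strong separation of the first-level cylinders, together with the uniform lower bound $\frac{1}{M(M-1)}$ on the contraction ratios $|\phi_k'|$, implies that this count is bounded by a constant depending only on $M$; the mass distribution principle then delivers $\mathrm{dim}_{\mathrm H}\,E_M \ge S(M)$, completing the proof.
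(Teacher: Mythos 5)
The paper does not prove this lemma at all: it is imported verbatim from the reference [Shen--Liu--Zhou, \emph{Turkish J. Math.} 32 (2008)], so there is no internal proof to compare against. Your argument is correct and is the natural one: since the L\"uroth map is affine on each first-level cylinder $I(k)=(1/k,1/(k-1)]$ with slope $k(k-1)$, the set $E_M$ is (up to a countable set of cylinder endpoints, which is harmless for Hausdorff dimension) the attractor of an iterated function system of similarities with ratios $1/(k(k-1))$, $2\le k\le M$, satisfying the open set condition, and the conclusion is Hutchinson's theorem with similarity dimension $S(M)$. Your upper bound via the level-$n$ cylinder cover is complete as written. For the lower bound, the identity $\mu(I(d_1,\dots,d_n))=|I(d_1,\dots,d_n)|^{S(M)}$ is right, and the counting step you flag as the main obstacle does go through because all contraction ratios lie between $1/(M(M-1))$ and $1/2$; a slightly cleaner way to finish than fixing a single level $n$ is the standard stopping-time family: for an interval $B$, take the maximal cylinders with $|I(d_1,\dots,d_m)|<|B|$. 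These are disjoint, each has length at least $|B|/(M(M-1))$ and $\mu$-measure at most $|B|^{S(M)}$, and at most $3M(M-1)$ of them can meet $B$, giving $\mu(B)\le 3M(M-1)|B|^{S(M)}$ directly. Two trivial slips worth fixing: in your case distinction for the digit ranges you write $2\le\sigma_i\le M$ for $1\le i\le r$ where the set $\tilde E$-style bound is $M+1$ (irrelevant here but sloppy), and ``strong separation'' should either be checked on the compact attractor (where it does hold, since the attractor stays away from the left endpoints $1/k$) or replaced by the open set condition with $U=(1/M,1)$, which is all you need.
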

\begin{lem}\label{lem:estimate of  HD }
For	$ S\left( M \right)  $  defined in Lemma \ref{lem: the hausdorff dimension of Em}, we have
	\begin{align*}
		0\leqslant S\left( M \right) <1 \,\, \text{\rm{and}} \,\,\lim_{M\rightarrow \infty} S\left( M \right) =1.
	\end{align*}
\end{lem}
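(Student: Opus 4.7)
The plan is to study the function $f_M(s) := \sum_{k=2}^M \bigl(\tfrac{1}{k(k-1)}\bigr)^s$ and use its monotonicity and boundary values to pin down $S(M)$ directly. For each fixed $M \geq 2$, each summand $\bigl(\tfrac{1}{k(k-1)}\bigr)^s$ is continuous and strictly decreasing in $s$ on $[0,\infty)$ (since $\tfrac{1}{k(k-1)} \leq \tfrac{1}{2} < 1$), so $f_M$ is continuous and strictly decreasing, which already gives uniqueness of the root $S(M)$.

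For the bound $0 \leq S(M) < 1$, I would evaluate $f_M$ at the endpoints $s=0$ and $s=1$. At $s=0$ we get $f_M(0) = M-1 \geq 1$, with equality exactly when $M=2$; at $s=1$ the telescoping identity gives
\begin{equation*}
f_M(1) = \sum_{k=2}^M \Bigl(\frac{1}{k-1} - \frac{1}{k}\Bigr) = 1 - \frac{1}{M} < 1.
\end{equation*}
By the intermediate value theorem combined with strict monotonicity, $S(M) \in [0,1)$, with $S(2)=0$ and $S(M) \in (0,1)$ for $M \geq 3$.

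For the limit $\lim_{M \to \infty} S(M) = 1$, I would first note that $S(M)$ is nondecreasing in $M$: increasing $M$ appends a positive term to $f_M$, so forcing the sum back down to $1$ requires a larger exponent. Hence $L := \lim_{M \to \infty} S(M) \in [0,1]$ exists. Suppose for contradiction $L < 1$. Then for every $M$, $S(M) \leq L$, and applying $f_M$ (monotone decreasing) gives $f_M(L) \geq f_M(S(M)) = 1$. The main observation is that for any fixed $s \in [0,1)$ we have $\bigl(\tfrac{1}{k(k-1)}\bigr)^s > \tfrac{1}{k(k-1)}$ (since $\tfrac{1}{k(k-1)} \in (0,1)$ and $s<1$), so
\begin{equation*}
\sum_{k=2}^{\infty}\Bigl(\frac{1}{k(k-1)}\Bigr)^{L} > \sum_{k=2}^{\infty}\frac{1}{k(k-1)} = 1.
\end{equation*}
Choose $M_0$ so large that the partial sum already exceeds $1$; that is, $f_{M_0}(L) > 1$, hence $S(M_0) > L$, contradicting $S(M) \leq L$.

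No step is truly an obstacle here; the only subtle point is the strict inequality $\bigl(\tfrac{1}{k(k-1)}\bigr)^s > \tfrac{1}{k(k-1)}$ for $s < 1$, which makes the infinite tail pass through the threshold $1$ and forces $L = 1$. Monotonicity of $S(M)$ and the endpoint evaluations of $f_M$ are routine.
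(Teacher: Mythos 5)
Your proof is correct and follows essentially the same route as the paper: the bound $0\leqslant S(M)<1$ comes from evaluating $f_M$ at $s=0$ and $s=1$ exactly as in the paper, and for the limit both arguments reduce to the same fact, namely that for any fixed exponent $s<1$ the partial sums $\sum_{k=2}^{M}\bigl(\tfrac{1}{k(k-1)}\bigr)^{s}$ eventually exceed $1$ (the paper establishes this directly with the explicit threshold $M>\lceil 1+\tfrac{1}{2^{\epsilon}-1}\rceil$ via $(k(k-1))^{\epsilon}\geqslant 2^{\epsilon}$, whereas you package it as a monotonicity-of-$S(M)$-plus-contradiction argument using the strict termwise comparison with the telescoping series). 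One small slip: from $S(M)\leqslant L$ and $f_M$ decreasing you get $f_M(L)\leqslant f_M(S(M))=1$, not $\geqslant$; fortunately that line is never used, since your contradiction is correctly derived at the end from $f_{M_0}(L)>1=f_{M_0}(S(M_0))$ forcing $S(M_0)>L$.
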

\begin{proof}
	Consider
   \begin{align*}
		\sum_{k=2}^M{\frac{1}{k\left( k-1 \right)}}=1-\frac{1}{M}<1  \,\, \text{and}\,\, \sum_{k=2}^M{\left( \frac{1}{k\left( k-1 \right)} \right) ^0}=M-1\geqslant 1,
	\end{align*}
	then
	\begin{align*}
		0\leqslant S\left( M \right) <1
	\end{align*}
	since $ \big( \frac{1}{k\left( k-1 \right)} \big) ^s $ is decreasing with respect to $ s\in \left[ 0,\infty \right)$.	
		
For any $ \epsilon >0 $, we set $ m =\lceil 1+\frac{1}{2^{\epsilon}-1} \rceil  ,$ where $ \lceil \cdot \rceil  $  represents the samllest integer not less than a real number. Then for any $ M>m $, we have
	\begin{align*}
		\sum_{k=2}^M{\left( \frac{1}{k\left( k-1 \right)} \right) ^{1-\epsilon}}
		&=\sum_{k=2}^M{\left( \frac{1}{k\left( k-1 \right)} \right)}\big( k\left( k-1 \right) \big) ^{\epsilon}	\\
		& >2^{\epsilon}\sum_{k=2}^M{\frac{1}{k\left( k-1 \right)}} \\
		& =2^{\epsilon}\left( 1-\frac{1}{M} \right) \\
%		& >2^{\epsilon}\left( 1-\frac{2^{\epsilon}-1}{2^{\epsilon}} \right)\\
		&=1.
	\end{align*}
Hence $ S\left( M \right) >1-\epsilon  $. Conbine this and $ S\left( M \right) <1 $, we have $ \lim\limits_{M\rightarrow \infty} S\left( M \right) =1 $.
\end{proof}

\begin{lem}
\label{lem:H(alpha)}
	For $ H\left( \alpha \right)  $ defined in Theorem \ref{thm1.1}, let $G\left( B \right)  $ be the unique solution of the equation
    \begin{align*}
	\sum_{k=2}^{\infty}{\left( \frac{1}{Bk\left( k-1 \right)} \right) ^s=1},
    \end{align*}
    then$$H\left( \alpha \right) =G\left( {e^{\alpha}} \right).$$
Furthermore,
$ H\left( \alpha \right)  $ is continuous with respect to $ \alpha \in \left( 0,\infty \right)  $ and we have
    \begin{align*}
    	\lim_{\alpha \rightarrow 0^+} H\left( \alpha \right) =1 \,,\,\lim_{\alpha \rightarrow \infty} H\left( \alpha \right) =\frac{1}{2}.
    \end{align*}
\end{lem}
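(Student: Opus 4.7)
The plan is to handle the four claims of the lemma in order: (a) well-definedness of $G(B)$, (b) the identity $H(\alpha) = G(e^\alpha)$, (c) continuity, and (d) the two boundary limits.

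\textit{Well-definedness of $G(B)$.} For each $B > 1$ I would set $\Phi(s, B) := \sum_{k=2}^\infty (Bk(k-1))^{-s}$ and observe that the summand is comparable to $B^{-s} k^{-2s}$ for large $k$, so $\Phi(\cdot, B)$ is smooth and strictly decreasing on $(1/2, \infty)$, tends to $+\infty$ as $s \to (1/2)^+$, and equals $B^{-1} < 1$ at $s = 1$. The intermediate value theorem then produces a unique $G(B) \in (1/2, 1)$ solving $\Phi(G(B), B) = 1$.

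\textit{Identification $H(\alpha) = G(e^\alpha)$.} The set defining $H(\alpha)$ is exactly the limsup set $\{d_n(x) \geq \varphi(n) \text{ i.o.}\}$ studied by Shen \cite{shenHausdorffDimensionSet2017}, in the special case $\varphi(n) = e^{\alpha n}$. I would invoke Shen's dimension formula: the dimension is the unique $s$ satisfying the pressure equation associated with the L\"{u}roth IFS and the growth rate $\varphi$, which for $\varphi(n) = (e^\alpha)^n$ reduces to $\sum_{k=2}^\infty (e^\alpha k(k-1))^{-s} = 1$. Reading off the unique solution gives $H(\alpha) = G(e^\alpha)$.

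\textit{Continuity and limits.} Continuity of $G$ (and hence of $H$) follows from the implicit function theorem applied to $F(s, B) := \Phi(s, B) - 1$ on $\{s > 1/2\} \times \{B > 1\}$: both partials are continuous and $\partial_s F$ is strictly negative, so the zero set is a $C^1$ graph $B \mapsto G(B)$. As $\alpha \to 0^+$ we have $B \to 1^+$, and the telescoping identity $\sum_{k=2}^\infty 1/(k(k-1)) = 1$ yields $\Phi(1, 1) = 1$, hence $G(1) = 1$ and $H(\alpha) \to 1$. As $\alpha \to \infty$, rewrite the defining equation as $\sum_{k=2}^\infty (k(k-1))^{-s} = B^s$; for any fixed $\epsilon > 0$, the left side at $s = 1/2 + \epsilon$ is a finite constant $C_\epsilon$ whereas $B^{1/2+\epsilon} \to \infty$, so eventually $G(B) < 1/2 + \epsilon$. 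Combined with $G(B) > 1/2$ this gives $G(B) \to 1/2$.

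The main obstacle is step (b). If one prefers a self-contained argument, one must give a direct two-sided dimension estimate: an upper bound by covering the limsup set with the cylinders $I(d_1, \ldots, d_{n-1}, d_n)$ having $d_n \geq e^{\alpha n}$ and using the pressure equation to force the tail sum of $s$-powers to vanish for $s > G(e^\alpha)$; and a lower bound via a Moran-type Cantor subset that forces $d_n \geq e^{\alpha n}$ along a well-chosen subsequence, with mass distribution weights proportional to $(e^\alpha k(k-1))^{-s}$ on the constrained digits.
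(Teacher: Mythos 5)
Your proposal is correct and follows essentially the same route as the paper: the key identification $H(\alpha)=G(e^{\alpha})$ is obtained, as in the paper, by invoking Shen's dimension formula (Theorem 4.2 of \cite{shenHausdorffDimensionSet2017}) for the limsup set with $\varphi(n)=e^{\alpha n}$. The only difference is that you supply direct arguments (implicit function theorem, behaviour of $\Phi(s,B)$ near $s=1/2$ and $s=1$) for the continuity of $G$ and the limits at $B\rightarrow 1^{+}$ and $B\rightarrow\infty$, which the paper simply imports from Lemma 2.3 of \cite{shenHausdorffDimensionSet2017}; these arguments are sound.
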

\begin{proof}
	For any $ B>1 $, according to Lemma 2.3 of \cite{shenHausdorffDimensionSet2017}, $ G\left( B \right) $ is  continuous with respect to $ B \in \left( 1,\infty \right)  $  and \begin{align*}
		\lim_{B\rightarrow 1^+} G\left( B \right)=1\,\,\text{and}\,\,\lim_{B\rightarrow \infty} G\left( B \right)=\frac{1}{2}.
	\end{align*}
	Put $ B= e^{\alpha} >1 $, by Theorem 4.2 of \cite{shenHausdorffDimensionSet2017}, it follows that  $ H\left( \alpha \right) =G\left( {e^{\alpha}} \right) $, then $ H\left( \alpha \right)  $ is continuous with respect to $ \alpha \in \left( 0,\infty \right)  $. Moreover, we have
	\begin{align*}
		\lim_{\alpha \rightarrow 0^+} H\left( \alpha \right) =1  \,\,\text{\rm{and}}\,\,\lim_{\alpha \rightarrow \infty} H\left( \alpha \right) =\frac{1}{2}.
	\end{align*}

\end{proof}
The following results is useful to estimate the lower bound of Hausdorff dimension for some sets.
\begin{lem}[See \cite{arroyo2020hausdorff}]
	\label{lem:key lem}
	Let $ \left\{ u_n \right\} _{n\geqslant 1} $ be a subsequence of $ \mathbb{N} $ with $u_n\ge 4$ for any $ n\ge1 $ and $ \lim\limits_{n\rightarrow \infty} u_n=\infty  $, then for any integer $ N\ge2 $, one has
	\begin{align*}
		\mathrm{dim_H}\,\Big\{ x\in \left( 0,1 \right] :u_n\leqslant d_n\left( x \right) \leqslant Nu_n-1\ \textup{for\ any}\ n\in \mathbb{N} \Big\} =u_0,
	\end{align*}
	where
	\begin{align*}
		u_0 :=\liminf\limits_{n\rightarrow \infty}\frac{\log \left( u_1\cdots u_n \right)}{2\log \left( u_1\cdots u_n \right) +\log u_{n+1}}.
	\end{align*}
\end{lem}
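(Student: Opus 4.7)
Let $F$ denote the set in the lemma; I would prove $\dim_H F\le u_0$ and $\dim_H F\ge u_0$ separately.

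For the upper bound, I would use ``bundled'' cylinder covers at each level. Inside any admissible cylinder $I(d_1,\dots,d_n)$ the sub-cylinders $I(d_1,\dots,d_n,j)$ for $u_{n+1}\le j\le Nu_{n+1}-1$ are consecutive (the Lüroth branches order the sets $(1/k,1/(k-1)]$ monotonically), so their union is a single interval $J(d_1,\dots,d_n)$ of length $|J|=|I(d_1,\dots,d_n)|\bigl(\tfrac{1}{u_{n+1}-1}-\tfrac{1}{Nu_{n+1}-1}\bigr)\asymp|I(d_1,\dots,d_n)|/u_{n+1}$. Since $F\subseteq\bigcup_{\text{adm.}}J(d_1,\dots,d_n)$, for any $s>u_0$ I would bound
\begin{align*}
\sum_{\text{adm.}}|J|^s\;\lesssim\;\Bigl(\prod_{k=1}^n(N-1)u_k\Bigr)\Bigl(\prod_{k=1}^n\tfrac{1}{u_k(u_k-1)}\Bigr)^{s}\,u_{n+1}^{-s}.
\end{align*}
Taking logarithms and using $u_k\to\infty$ to absorb the $O(n)$ contribution from the factor $(N-1)^n$ into $\log(u_1\cdots u_n)$, this reduces to $(1-2s)\log(u_1\cdots u_n)-s\log u_{n+1}+o(\log(u_1\cdots u_n))$. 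The definition of $u_0$ as a $\liminf$ furnishes infinitely many $n$ along which this tends to $-\infty$, so $\mathcal H^s(F)=0$ and hence $\dim_H F\le u_0$.

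For the lower bound I would apply the mass distribution principle to the probability measure $\mu$ that is uniform on all admissible cylinders,
\begin{align*}
\mu\bigl(I(d_1,\dots,d_n)\bigr)=\prod_{k=1}^n\frac{1}{(N-1)u_k}.
\end{align*}
Fix $s<u_0$. For $x\in F$ and small $r$, choose $n$ with $|I(d_1(x),\dots,d_{n+1}(x))|\le r<|I(d_1(x),\dots,d_n(x))|$ and split into two regimes according to whether $r$ exceeds the bundled length $|J|\asymp|I^{(n)}|/u_{n+1}$ or not. When $r\ge|J|$ the ball meets $O(1)$ level-$n$ cylinders so $\mu(B)\lesssim\mu(I^{(n)})$, while when $r<|J|$ the maximum $\mu$-density on $J$, attained on the thinnest admissible sub-cylinder, gives $\mu(B)\lesssim r u_{n+1}\mu(I^{(n)})/|I^{(n)}|$. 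In both regimes, taking the worst $r$ in the corresponding range and using $|I^{(n)}|\asymp\prod u_k^{-2}$ and $\mu(I^{(n)})=\prod 1/((N-1)u_k)$ leads to the same bound $\log\bigl(\mu(B)/r^s\bigr)\le(2s-1)\log(u_1\cdots u_n)+s\log u_{n+1}+O(n)$. The inequality $s<u_0$ converts into $(1-2s)\log(u_1\cdots u_n)-s\log u_{n+1}\ge\delta\bigl(2\log(u_1\cdots u_n)+\log u_{n+1}\bigr)$ for some $\delta>0$ and all large $n$, which forces $\mu(B)\le Cr^s$ uniformly; hence $\dim_H F\ge s$ and letting $s\uparrow u_0$ completes the lower bound.

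The principal obstacle is the ball-counting estimate in the regime $r<|J|$. Within $J$ the admissible sub-cylinder lengths $|I(d_1,\dots,d_n,j)|$ vary by a factor of $\asymp N^2$ as $j$ ranges over $[u_{n+1},Nu_{n+1}-1]$, so $\mu$ is not uniform on $J$; one must work with the \emph{maximum} density rather than the average to keep the Frostman bound uniform in $n$. A naïve estimate that treats $\mu$ as uniform on $I^{(n)}$ would only deliver $\dim_H F\ge1/2$, strictly weaker than $u_0$ when $u_n$ grows fast enough (e.g.\ $u_n=e^{e^n}$). The hypotheses $u_n\ge4$ and $u_n\to\infty$ are precisely what allow the $O(n)$ terms to be dominated by $\log(u_1\cdots u_n)$ in both the upper- and lower-bound arguments, so that only the $\liminf$ quantity $u_0$ survives.
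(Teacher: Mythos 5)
The paper never proves this lemma: it is imported wholesale from \cite{arroyo2020hausdorff}, so there is no internal argument to compare yours against, and your proposal in effect supplies what the paper omits. Judged on its own terms, your outline is correct and follows the route one would expect (and that the cited source takes): a natural cover by the bundled intervals $J(d_1,\dots,d_n)$ for the upper bound, and the uniform Bernoulli measure on admissible cylinders together with the mass distribution principle for the lower bound. The individual reductions are sound: $|J|\asymp |I|/u_{n+1}$ because consecutive Lüroth branches are adjacent; the count of admissible words is $\prod_{k\le n}(N-1)u_k$; the $O(n)$ contributions from $(N-1)^n$ and from replacing $u_k(u_k-1)$ by $u_k^2$ are $o(\log(u_1\cdots u_n))$ precisely because $u_n\to\infty$ forces $\log(u_1\cdots u_n)/n\to\infty$; and $s>u_0$ (resp.\ $s<u_0$) converts, via the liminf, into a subsequence along which $\sum|J|^s\to 0$ (resp.\ into the $\delta$-margin inequality you state, valid for all large $n$), which is exactly what the two bounds need.

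Two details that a complete write-up must supply, both in the lower bound. First, your claim that for $|J|\le r<|I^{(n)}(x)|$ the ball meets $O(1)$ level-$n$ cylinders rests on the gap structure: (a) admissible level-$n$ cylinders sharing a level-$(n-1)$ parent are adjacent intervals whose lengths agree up to a factor about $N^2$, so a ball of radius $r<|I^{(n)}(x)|$ meets at most $O_N(1)$ of them; and (b) the bundled set of that parent lies at distance at least $|I^{(n-1)}|/(Nu_n-1)$ from the parent's endpoints, which exceeds $|I^{(n)}(x)|\le |I^{(n-1)}|/(u_n(u_n-1))$ once $u_n>N$; hence for small $r$ the ball cannot reach measure carried by any other parent. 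Without (b) one cannot rule out the ball straddling several parents whose own bundled sets sit near the shared boundary. Second, in the regime $r<|J|$ the linear density bound $\mu(B)\lesssim r\,u_{n+1}\mu(I^{(n)})/|I^{(n)}|$ must be supplemented by an additive term of order $\mu(I^{(n+1)})$ for the two partially covered edge sub-cylinders; this is precisely where your normalization $r\ge |I^{(n+1)}(x)|$ is used, via $\mu(I^{(n+1)})\lesssim |I^{(n+1)}|^s\le r^s$. (Your side remark that a uniform-density estimate on $I^{(n)}$ ``would only deliver $1/2$'' is not quite the right diagnosis --- the uniform density underestimates $\mu$ on $J$, so it is not a valid bound at all --- but this does not affect the argument.) With these points filled in, your proposal is a complete and correct proof of the lemma.
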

\begin{pro}[See \cite{falconer2014fractal}]
	\label{lem:CSofHD}
	Let $ E\subseteq \mathbb{R} ^n $, suppose that $ f\,\,: E\rightarrow \mathbb{R} ^m $ satisfies the $ H\ddot{o}lder $ condition
	\begin{align*}
		\big| f \left( x\right)-f\left( y \right)\big |\le c\big|x-y\big|^{\alpha}\,\,    (x,y\in E),
	\end{align*}
where $ c $ is a constant. Then
	\begin{align*}
		\mathrm{dim_H}\,f\left( E \right) \le \frac{1}{\alpha}\mathrm{dim_H}\,E.
	\end{align*}
\end{pro}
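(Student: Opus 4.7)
The plan is to push the $s$-dimensional Hausdorff measure of $E$ forward through $f$, at the cost of replacing the exponent $s$ by $s/\alpha$. Concretely, the strategy is: fix any $s > \dim_{\mathrm H} E$ so that $\mathcal H^s(E) = 0$, derive a direct inequality of the form $\mathcal H^{s/\alpha}(f(E)) \le c^{s/\alpha}\,\mathcal H^s(E)$, conclude that $\dim_{\mathrm H} f(E) \le s/\alpha$, and finally let $s$ decrease to $\dim_{\mathrm H} E$.

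The first step is to use the definition of Hausdorff measure: for any $s > \dim_{\mathrm H} E$ and any $\varepsilon, \delta > 0$, there exists a countable cover $\{U_i\}_{i\ge 1}$ of $E$ with $|U_i| \le \delta$ and $\sum_{i} |U_i|^s < \varepsilon$. The Hölder hypothesis, applied to every pair of points in the (possibly empty) set $U_i \cap E \subseteq E$, yields
\begin{align*}
\big|f(U_i \cap E)\big| \le c \big|U_i \cap E\big|^{\alpha} \le c |U_i|^{\alpha}.
\end{align*}
Consequently $\{f(U_i \cap E)\}_{i\ge 1}$ is a cover of $f(E)$ whose diameters are all bounded by $c\delta^{\alpha}$, and summing gives
\begin{align*}
\sum_{i} \big|f(U_i \cap E)\big|^{s/\alpha} \le c^{s/\alpha} \sum_{i} |U_i|^s < c^{s/\alpha}\varepsilon.
\end{align*}

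Sending $\varepsilon \to 0$ first, and then $\delta \to 0$ (which forces $c\delta^{\alpha} \to 0$, so the candidate covers of $f(E)$ become fine enough for $\mathcal H^{s/\alpha}$), gives $\mathcal H^{s/\alpha}(f(E)) = 0$, and hence $\dim_{\mathrm H} f(E) \le s/\alpha$. Finally, taking the infimum over $s > \dim_{\mathrm H} E$ produces the claimed bound $\dim_{\mathrm H} f(E) \le \tfrac{1}{\alpha} \dim_{\mathrm H} E$.

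This is a classical calculation and I do not expect a genuine obstacle. The only point worth underlining is that the Hölder inequality is used for the set $U_i \cap E$ rather than for $U_i$ itself, since $f$ is only defined on $E$; this is harmless because one still has the diameter bound $|U_i \cap E| \le |U_i|$. Everything else is a straightforward manipulation of $\delta$-covers.
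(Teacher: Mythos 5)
Your argument is correct and is precisely the classical proof of this fact; the paper itself gives no proof, simply citing Falconer's book, and your pushforward-of-$\delta$-covers computation (bounding $|f(U_i\cap E)|\le c|U_i|^{\alpha}$ and summing the $s/\alpha$ powers) is exactly the argument found there. The one point you flag — applying the H\"older condition only on $U_i\cap E$ since $f$ is defined only on $E$ — is handled correctly, so there is nothing to fix.
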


\section{PROOF OF THEOREM \ref{thm1.1}}
Recall that
\begin{align*}
	E_{\mathrm{sup}}\left( \psi \right)  =\left\{ x\in \left( 0,1 \right]  :\limsup_{n\rightarrow \infty}\frac{\log d_n\left( x \right)}{\psi \left( n \right)}=1 \right\} .
\end{align*}

We consider the following cases:\\
{\bf Case \Rmnum{1}: } $ \underset{n\rightarrow \infty}{\lim}\,\frac{\psi \left( n \right)}{n}=0 $. \par
In this case, the upper bound follows trivially by noting that $ E_{\mathrm{sup}}\left( \psi \right) \subseteq \left( 0,1 \right]  $. We only focus on the lower bound of the Hausdorff dimension of $ E_{\mathrm{sup}}\left( \psi \right) $. The lower bound is obtained by constructing a Cantor subset $  E_M\left( \psi \right) $ of $ E_{\mathrm{sup}}\left( \psi \right) $.

Let $ \left\{ m_k \right\} _{k\geqslant 1} $ be a subequence of $\N$ with $ m_k=2^k $ for all $  k\geqslant 1 $, and for any integer $ M\ge2  $, define
%Firstly, we give a describtion of $  E_M\left( \psi \right) $.
\begin{align*}
	E_M\left( \psi \right)=\Big\{x\in \left( 0,1 \right] :d_{m_k}\left( x \right) =\lfloor \mathrm{e}^{\,\psi \left( m_k \right)}\, \rfloor +1 \,\, \text{and}\ 2\le d_i\left( x \right) \le M\,(i\ne m_k\,\,{\rm for\ any}\,\,k\ge 1)  \Big\}.
\end{align*}
It is clear that $  E_M\left( \psi \right) \subseteq E_{\mathrm{sup}}\left( \psi \right) . $

Now, we estimate the lower bound of the Huasdorff dimension of $ E_M\left( \psi \right) $. Firstly, we use the language of symbolic space to describe $ E_M\left( \psi \right) $. For any integer $ n\geqslant 1 $, define
\begin{align*}
C_n=\Big\{(\sigma _1,\cdots ,\sigma _n)\in \Sigma _n:\sigma _{m_k}=\lfloor \mathrm{e}^{\psi \left( m_k \right)} \rfloor +1 \, \, \text{and} \, \,2\le \sigma _i\le M,1\le i\ne m_k\le n\Big\}.
\end{align*}
For any integer $ n\geqslant 1$ and  $ (\sigma _1,\cdots ,\sigma _n)\in C_n $, the $ n  $-th basic interval  is defined as follows:
\begin{align} \label{found interval}
J_n(\sigma _1,\cdots ,\sigma _n)=\bigcup_{(\sigma _1,\cdots ,\sigma _n,\sigma _{n+1})\in C_{n+1}}{I_{n+1}(\sigma _1,\cdots ,\sigma _n,\sigma _{n+1})}
	.
\end{align}
%where the union in \eqref{found interval} is taken over all $ \sigma _{n+1} $ such that $ (\sigma _1,\cdots ,\sigma _n,\sigma _{n+1})\in C_{n+1}. $
Obviously,
\begin{align}\label{EmInJn}
	E_M\left( \psi \right) =\bigcap_{n\ge 1}{\bigcup_{(\sigma _1,\cdots ,\sigma _n)\in C_n}{I_n(\sigma _1,\cdots ,\sigma _n)}}=\bigcap_{n\ge 1}{\bigcup_{(\sigma _1,\cdots ,\sigma _n)\in C_n}{J_n(\sigma _1,\cdots ,\sigma _n)}}.
\end{align}

Let $ t\left( n \right) =\#\left\{ k\in \mathbb{N} \,\,: 2^k\leqslant n \right\}   $, where the symbol $``\#"$  denotes the cardinality of a set. It is easy to see that $t\left( n \right) \leqslant \lfloor \log _2n \rfloor \leqslant \log _2n $,  then we have
\begin{align}\label{eq:lim t/n}
	\underset{n\rightarrow \infty}{\lim}\frac{t\left( n \right)}{n}=0.	
\end{align}
For any $ (\sigma _1,\cdots ,\sigma _n)\in C_n $, let $ \left( \sigma _1,\cdots ,\sigma _n \right) ^*$ be the block obtained  by eliminating the terms $ \left\{ \sigma _{m_k}: 1\leqslant k\leqslant t\left( n \right) \right\}  $ in $ (\sigma _1,\cdots ,\sigma _n)$. Obviously,
\begin{align*}
\left( \sigma _1,\cdots ,\sigma _n \right) ^*\in \left\{ 2,3,\cdots ,M \right\} ^{n-t\left( n \right)}.
\end{align*}
For simplicity, we write
\begin{align}
	I_{n}^{*}(\sigma _1,\cdots ,\sigma _n):=\,\,I_{n-t\left( n \right)}\left( \sigma _1,\cdots ,\sigma _n \right) ^*.
\end{align}
%Note that

Since $\lim\limits_{n\rightarrow \infty}\frac{\psi \left( n \right)}{n}=0 $, then
\begin{align*}
	0\leqslant \limsup_{{n\rightarrow \infty}}\frac{1}{n}\sum_{k=1}^{t\left( n \right)}{\psi \left( m_k \right)}\leqslant \limsup_{n\rightarrow \infty}\frac{1}{2^{t\left( n \right)}}\sum_{k=1}^{t\left( n \right)}{\psi \big( 2^k \big)}\leqslant \limsup_{n\rightarrow \infty}\,\,\frac{1}{2^n}\sum_{k=1}^n{\psi \left( 2^k \right)}=0,
\end{align*}
%put $ a_n:=\dfrac{1}{2^n}\sum\limits_{k=1}^n{\psi \left( 2^k \right)} $, then $ \left\{ a_{t\left( n \right)} \right\} _{n\geqslant 1}\subseteq \left\{ a_n \right\} _{n\geqslant 1} $, we have
%\begin{align*}
%	\limsup_{n\rightarrow \infty}\,\,\frac{1}{2^{t\left( n \right)}}\sum_{k=1}^{t\left( n \right)}{\psi \left( 2^k \right)}=\limsup_{n\rightarrow \infty}\,a_{t\left( n \right)}\leqslant \limsup_{n\rightarrow \infty}\,a_n=\limsup_{n\rightarrow \infty}\,\,\frac{1}{2^n}\sum_{k=1}^n{\psi \left( 2^k \right)}.
%\end{align*}
%Since $\underset{n\rightarrow \infty}{\lim}\frac{\psi \left( n \right)}{n}=0 $, then for any $ \epsilon >0 $, there exists an integer $ N >0 $ such that
%\begin{align*}
%	\underset{n\rightarrow \infty}{\lim\mathrm{sup}}\,\,\frac{1}{2^n}\sum_{k=1}^n{\psi \left( 2^k \right)}
%	& \leqslant \underset{n\rightarrow \infty}{\lim  \mathrm{sup}}\,\,\frac{1}{2^n}\sum_{k=1}^{N}{\psi \left( 2^k \right)}+\underset{n\rightarrow \infty}{\lim  \mathrm{sup}}\,\,\frac{1}{2^n}\sum_{k=N +1}^n{\psi \left( 2^k \right)} \\
%	& \leqslant 0+2\epsilon =2\epsilon
%\end{align*}
%Let $ \epsilon \rightarrow 0^+ $,
we have
\begin{align}\label{eqa:lim:t(n)andphi}
	\underset{n\rightarrow \infty}{\lim}\frac{1}{n}\sum_{k=1}^{t\left( n \right)}{\psi \left( m_k \right)}=0.
\end{align}

Now we compare the length of $ I_n\left( \sigma _1,\cdots ,\sigma _n \right)  $ and the length of $ I_{n}^{*}(\sigma _1,\cdots ,\sigma _n) $.
\begin{lem} \label{lem In<In-}
	For any $ 0<\epsilon <1 $, there exists a positive integer $ N_1 $ such that for any $ n\geqslant N_1 $ and $ (\sigma _1,\cdots ,\sigma _n)\in C_n $, we have
	\begin{align*}
     \big| I_n \left( \sigma _1,\cdots ,\sigma _n\right)\big|\ge \big|I_{n}^{*}(\sigma _1,\cdots ,\sigma _n)\big|^{1+\epsilon}.
	\end{align*}
\end{lem}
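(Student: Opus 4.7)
The plan is to exploit the multiplicative structure of cylinder lengths given by Proposition \ref{proposition: lenth of In}, and reduce the claimed inequality to a comparison between the ``cost'' of the deleted digits and the total log-length $-\log|I_n^*|$. First I would observe that since $I_n^*(\sigma_1,\dots,\sigma_n)$ is by definition the cylinder obtained from $I_n(\sigma_1,\dots,\sigma_n)$ by deleting the digits at positions $m_1,\dots,m_{t(n)}$, the product formula \eqref{equation lenth of In} yields the factorization
\begin{align*}
\big|I_n(\sigma_1,\dots,\sigma_n)\big|=\big|I_n^*(\sigma_1,\dots,\sigma_n)\big|\cdot\prod_{k=1}^{t(n)}\frac{1}{\sigma_{m_k}(\sigma_{m_k}-1)}.
\end{align*}
Taking logarithms and using the fact that both sides of the target inequality are negative, the statement $|I_n|\geq|I_n^*|^{1+\epsilon}$ is equivalent to
\begin{align*}
\sum_{k=1}^{t(n)}\log\big(\sigma_{m_k}(\sigma_{m_k}-1)\big)\leq \epsilon\cdot\big(-\log|I_n^*(\sigma_1,\dots,\sigma_n)|\big).
\end{align*}

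Next I would estimate each side separately. For the left-hand side, since $\sigma_{m_k}=\lfloor e^{\psi(m_k)}\rfloor+1\leq e^{\psi(m_k)}+1$, a direct computation gives $\log(\sigma_{m_k}(\sigma_{m_k}-1))\leq 2\psi(m_k)+C$ for some absolute constant $C$, hence
\begin{align*}
\sum_{k=1}^{t(n)}\log\big(\sigma_{m_k}(\sigma_{m_k}-1)\big)\leq 2\sum_{k=1}^{t(n)}\psi(m_k)+C\cdot t(n),
\end{align*}
which is $o(n)$ by \eqref{eq:lim t/n} and \eqref{eqa:lim:t(n)andphi}. For the right-hand side, since $\sigma_i^*\in\{2,\dots,M\}$ in all $n-t(n)$ remaining coordinates, each factor $\sigma_i^*(\sigma_i^*-1)\geq 2$, so
\begin{align*}
-\log|I_n^*(\sigma_1,\dots,\sigma_n)|\geq (n-t(n))\log 2,
\end{align*}
and by \eqref{eq:lim t/n} this is at least $\tfrac{n}{2}\log 2$ for all sufficiently large $n$.

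Combining the two bounds, the ratio between the left and right sides of the reformulated inequality is bounded by
\begin{align*}
\frac{2\sum_{k=1}^{t(n)}\psi(m_k)+C\cdot t(n)}{(n-t(n))\log 2},
\end{align*}
which tends to $0$ as $n\to\infty$. Choosing $N_1$ so that this ratio is below $\epsilon$ for every $n\geq N_1$ completes the argument; note the bound is uniform in $(\sigma_1,\dots,\sigma_n)\in C_n$ because the deleted digits $\sigma_{m_k}$ are prescribed and the kept digits only affect $|I_n^*|$ through a lower bound. The argument contains no real obstacle; the only thing to be careful about is keeping the estimate uniform over the admissible word, which is automatic from the constructions because the contribution of the free coordinates $\sigma_i\in\{2,\dots,M\}$ can only make $-\log|I_n^*|$ larger.
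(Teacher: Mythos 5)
Your proof is correct and is essentially the same as the paper's: both factor $|I_n|=|I_n^*|\cdot\prod_{k=1}^{t(n)}\big(\sigma_{m_k}(\sigma_{m_k}-1)\big)^{-1}$, bound the deleted digits' contribution via $\sigma_{m_k}\leqslant e^{\psi(m_k)}+1$ using \eqref{eq:lim t/n} and \eqref{eqa:lim:t(n)andphi}, and compare against the uniform lower bound $-\log|I_n^*|\geqslant (n-t(n))\log 2$. The only difference is presentational: you work additively with logarithms, while the paper carries out the same comparison multiplicatively (its quantities $b_n$ and $c_n$ are exactly your numerator and the error terms in your denominator).
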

\begin{proof}
	For any $ (\sigma _1,\cdots ,\sigma _n)\in C_n $, set $ Q_0:=1 $, $ Q_n:=\prod\limits_{k=1}^n{\sigma _k(\sigma _k-1)} $ and $ Q_n\left( \sigma _1,\cdots ,\sigma _n \right) ^*:=Q_n/\prod\limits_{i=1}^{t\left( n \right)}{\sigma _{m_i}(\sigma _{m_i}-1)}. $
According to Proposition \ref{proposition: lenth of In}, we have
	\begin{align*}
		\big| I_n(\sigma _1,\cdots ,\sigma _n) \big|&=\frac{1}{Q_n}
	\end{align*}
and	
    \begin{align*}
		\big| I_{n}^{*}(\sigma _1,\cdots ,\sigma _n) \big|&=\frac{1}{Q_n\left( \sigma _1,\cdots ,\sigma _n \right) ^*}.
	\end{align*}
By the definition of $ C_n $, one has
	\begin{align*}
		e^{\psi \left( m_k \right)}<\sigma _{m_k}\leqslant e^{\psi \left( m_k \right)}+1 \,\,\text{for\,\,any\,\,} k\geqslant1,
	\end{align*}
%note that
%    \begin{align*}
%	Q_n=\prod_{k=1}^n{\sigma _k(\sigma _k-1)}=Q_n\left( \sigma _1,\cdots ,\sigma _n \right) ^*\cdot \prod_{i=1}^{t\left( n \right)}{\sigma _{m_i}(\sigma _{m_i}-1)},
%	\end{align*}
%we have	
then
	\begin{align} \label{eq :lem 3.2one}
			\big| I_n(\sigma _1,\cdots ,\sigma _n) \big|
			=\frac{1}{Q_ n}
			&=\frac{1}{Q_n\left( \sigma _1,\cdots ,\sigma _n \right) ^*\cdot \prod\limits_{i=1}^{t\left( n \right)}{\sigma _{m_i}(\sigma _{m_i}-1)}} \notag \\
			&\geqslant \frac{1}{Q_n\left( \sigma _1,\cdots ,\sigma _n \right) ^*}\cdot \prod\limits_{i=1}^{t\left( n \right)}{\frac{1}{\left( e^{\psi \left( m_i \right)}+1 \right) \cdot \mathrm{e}^{\psi \left( m_i \right)}}}	.	
	\end{align}
Set
$$b_n =\frac{t\left( n \right) -1}{n}\cdot \log 2+\frac{2}{n}\cdot \sum_{k=1}^{t\left( n \right)}{\psi \left( m_k \right)}$$
and
$$
		c_n=\frac{\log 2}{n}+\frac{t\left( n \right)}{n}\cdot \log 2 ,
$$
then by \eqref{eq:lim t/n} and \eqref{eqa:lim:t(n)andphi}, we have$$
		\underset{n\rightarrow \infty}{\lim}b_n =\underset{n\rightarrow \infty}{\lim}\Big( \frac{t\left( n \right) -1}{n}\cdot \log 2+\frac{2}{n}\cdot \sum_{k=1}^{t\left( n \right)}{\psi \left( m_k \right)} \Big) =0$$
and 	
		$$\underset{n\rightarrow \infty}{\lim}c_n=\underset{n\rightarrow \infty}{\lim}\left( \frac{\log 2}{n}+\frac{t\left( n \right)}{n}\cdot \log 2 \right) =0 .$$
Hence for any $ 0<\epsilon <1 $, there exists an integer $ N_1 $ such that for any $ n\geqslant N_1   $, one has
	\begin{align}\label{eq:bn}
		\frac{t\left( n \right) -1}{n}\cdot \log 2+\frac{2}{n}\cdot \sum_{k=1}^{t\left( n \right)}{\psi \left( m_k \right)}<\epsilon \cdot \log a
	\end{align}
and
	\begin{align} \label{eq:cn}
		\frac{\log 2}{n}+\frac{t\left( n \right)}{n}\epsilon \cdot \log 2<\frac{\log 2}{n}+\frac{t\left( n \right)}{n}\cdot \log 2<\epsilon \left( \log 2-\log a \right) ,
	\end{align}
where $ a $ is a real number with $ 0<a<1$. This shows that
	\begin{align}\label{eq bcn fangsuo}
		2^{t\left( n \right)}\cdot \mathrm{e}^{2 \cdot\sum\limits_{k=1}^{t\left( n \right)}{\psi \left( m_k \right)}} \,\overset{\textup{by}\,\eqref{eq:bn}}{<}2a^{n\epsilon}\,\overset{\textup{by} \,\eqref{eq:cn}}{<}2^{\left( n-t\left( n \right) \right)  \epsilon}.
	\end{align}
%	\begin{align}\label{eq: cn}
%		2a^{n\epsilon}<2^{\left( n-t\left( n \right) \right) \cdot \epsilon}	.
%	\end{align}
Note that
   \begin{align}\label{eq:en}
\prod_{i=1}^{t\left( n \right)}{\left( \mathrm{e}^{\psi \left( m_k \right)}+1 \right) \cdot \mathrm{e}^{\psi \left( m_k \right)}}\leqslant \prod_{i=1}^{t\left( n \right)}{2\cdot \mathrm{e}^{2\psi \left( m_k \right)}}=2^{t\left( n \right)}\cdot \mathrm{e}^{2\cdot\sum\limits_{k=1}^{t\left( n \right)}{\psi \left( m_k \right)} } ,
	\end{align}
	then by formula \eqref{eq :lem 3.2one}, \eqref{eq bcn fangsuo}, \eqref{eq:en} and Lemma \ref{lem Qn}, when $n\geqslant N_1  $,
	\begin{align*}
		\big| I_n(\sigma _1,\cdots ,\sigma _n) \big|
		&\geqslant \frac{1}{Q_n\left( \sigma _1,\cdots ,\sigma _n \right) ^*\cdot \prod\limits_{i=1}^{t\left( n \right)}{\left( \mathrm{e}^{\psi \left( m_k \right)} +1 \right) \cdot \mathrm{e}^{\psi \left( m_k \right)}}}
		\\
		&\geqslant \frac{1}{Q_n\left( \sigma _1,\cdots ,\sigma _n \right) ^*\cdot 2^{\left( n-t\left( n \right) \right) \epsilon}}
		\\
		&\geqslant \frac{1}{Q_n\left( \sigma _1,\cdots ,\sigma _n \right) ^*\cdot \left( Q_n\left( \sigma _1,\cdots ,\sigma _n \right) ^* \right) ^{\epsilon}} \\
		& =\big| I_{n}^{*}(\sigma _1,\cdots ,\sigma _n) \big|^{1+\epsilon}.
	\end{align*}
\end{proof}

%For  $ x\neq y\in E_M\left( \psi \right) ,$ now we estimate $|x-y|$ to make use of Proposition \ref{lem:CSofHD} to estimate the Hausdorff dimension of $E_M\left( \psi \right)$.

In order to make use of Proposition \ref{lem:CSofHD} to estimate the Hausdorff dimension of $E_M\left( \psi \right)$, now  we estimate $|x-y|,$ where $ x\ne y\in E_M\left( \psi \right) $.
\begin{lem}\label{lem: 3.3}
	For any $ x\ne y\in E_M\left( \psi \right) $, there exists $ (\sigma _1,\cdots ,\sigma _n)\in C_n $ and $ l_{n+1}\ne r_{n+1} $ satisfying $ (\sigma _1,\cdots ,\sigma _n,l_{n+1})\in C_{n+1} $ and  $ (\sigma _1,\cdots ,\sigma _n,r_{n+1})\in C_{n+1} $  such that
	\begin{align*}
	x\in I_{n+1}(\sigma _1,\cdots ,\sigma _n,l_{n+1}) ,\  y\in I_{n+1}(\sigma _1,\cdots ,\sigma _n,r_{n+1})
    \end{align*}
and
	\begin{align*}
	\big| x-y \big|\geqslant \frac{1}{M^3}\big| I_n(\sigma _1,\cdots ,\sigma _n) \big|.
    \end{align*}	
\end{lem}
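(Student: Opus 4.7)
The plan is to take $n$ to be one less than the first position where the L\"uroth expansions of $x$ and $y$ disagree. Since $x\ne y$ both lie in $E_M(\psi)$ and the digits at positions $m_k$ are forced to equal $\lfloor e^{\psi(m_k)}\rfloor+1$, the first index of disagreement $n_0\ge 1$ cannot be any $m_k$; consequently $d_{n_0}(x),d_{n_0}(y)\in\{2,\ldots,M\}$. Setting $n=n_0-1$, $\sigma_i=d_i(x)=d_i(y)$ for $1\le i\le n$, and (after relabeling if necessary) $l_{n+1}:=d_{n+1}(x)<d_{n+1}(y)=:r_{n+1}$, both augmented words lie in $C_{n+1}$.

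Write $S_n$ for the common left endpoint of $I_n(\sigma_1,\ldots,\sigma_n)$ and $P_n$ for its length. By Proposition~\ref{proposition: lenth of In}, the cylinder containing $x$ lies to the right of the one containing $y$, so a direct endpoint computation yields
\[
 |x-y|\ge \Bigl(S_n+\frac{P_n}{l_{n+1}}\Bigr)-\Bigl(S_n+\frac{P_n}{r_{n+1}-1}\Bigr)=\frac{P_n(r_{n+1}-1-l_{n+1})}{l_{n+1}(r_{n+1}-1)}.
\]
When $r_{n+1}\ge l_{n+1}+2$, the numerator is $\ge 1$ and the denominator is $\le M^2$, giving $|x-y|\ge P_n/M^2\ge P_n/M^3$ and finishing this sub-case.

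The main obstacle is the remaining sub-case $r_{n+1}=l_{n+1}+1$, in which the two cylinders share the endpoint $p:=S_n+P_n/l_{n+1}$ and the above estimate collapses to $0$. Expanding via the L\"uroth map gives
\[
 x-p=\frac{P_n\,T^{n+1}x}{l_{n+1}(l_{n+1}-1)},\qquad p-y=\frac{P_n(1-T^{n+1}y)}{l_{n+1}(l_{n+1}+1)},
\]
so the key claim becomes: at least one of $T^{n+1}x\ge 1/M$ or $1-T^{n+1}y\ge 1/M$ must hold. Suppose both failed. Then $T^{n+1}x<1/M$ forces $d_{n+2}(x)\ge M+1$, which in view of the constraint $d_{n+2}\le M$ at non-$m_k$ positions forces $n+2=m_k$ for some $k$, whence $d_{n+2}(y)=d_{n+2}(x)=\lfloor e^{\psi(m_k)}\rfloor+1\ge M+1$; yet $T^{n+1}y>1-1/M\ge 1/2$ forces $d_{n+2}(y)=2$, a contradiction. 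Hence one of the two bounds holds, and since $l_{n+1}(l_{n+1}\pm 1)\le M^2$, either $x-p$ or $p-y$ is at least $P_n/M^3=|I_n(\sigma_1,\ldots,\sigma_n)|/M^3$. The delicate part, and the main obstacle, is precisely this shared-endpoint configuration: its resolution hinges on exploiting the incompatibility between the cap $d_n\le M$ at generic positions and the forced large digit at each $m_k$-position, which prevents $x$ and $y$ from simultaneously crowding near $p$.
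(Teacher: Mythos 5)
Your proof is correct, and it reaches the paper's bound through a recognizably different organization of the argument. Both you and the paper reduce to the last index $n$ of agreement, observe that $n+1$ cannot be one of the prescribed positions $m_k$, and ultimately exploit the constraint on the $(n+2)$-th digit; but the paper never distinguishes adjacent from non-adjacent digits. Instead it bounds $|x-y|$ below by the gap between the two basic intervals $J_{n+1}$ (which already encode the restriction on the next digit) and cases on whether $n+2=m_k$ or not, performing a single endpoint computation in each case. You instead split on whether $r_{n+1}\geqslant l_{n+1}+2$ --- where the cylinders $I_{n+1}$ themselves are separated by at least $|I_n(\sigma_1,\cdots,\sigma_n)|/M^2$ and membership in $E_M(\psi)$ plays no role --- and isolate the only delicate configuration $r_{n+1}=l_{n+1}+1$, where the cylinders abut at $p$. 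Your contradiction argument there (if $x$ crowds $p$ then $d_{n+2}(x)\geqslant M+1$, forcing $n+2=m_k$ and hence $d_{n+2}(y)=d_{n+2}(x)\geqslant M+1$, which is incompatible with $y$ crowding $p$ from the other side since that would force $d_{n+2}(y)=2$) is a clean dynamical reformulation of what the paper obtains by comparing endpoints of the $J_{n+1}$; it makes the role of the digit constraint at position $n+2$ more transparent, at the cost of an extra case split. Both routes produce the same constant $M^{-3}$, and all of your intermediate identities (the expressions for $x-p$ and $p-y$ in terms of $T^{n+1}x$ and $T^{n+1}y$, and the bounds $l_{n+1}(l_{n+1}\pm1)\leqslant M^2$) check out.
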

%--------------proof------
\begin{proof}
	For any $ x=\left[ x_1,x_2,\cdots \right] \in E_M\left( \psi \right)  $, $ y=\left[ y_1,y_2,\cdots \right] \in E_M\left( \psi \right)  $ and $ x\ne y $, suppsoe that $ n $ is the greatest integer such that $ x_i=y_i $ for $ 1\leqslant i\leqslant n $, then there exists $ (\sigma _1,\cdots ,\sigma _n)\in C_n $ such that $ x,y\in I_n(\sigma _1,\cdots ,\sigma _n) $. If $ n=0 $, set $ I_0:=\left( 0,1 \right]  $, then $ x,y\in I_0 $. Otherwise, there exsit $ l_{n+1}\ne r_{n+1} $ such that $ x\in I_{n+1}(\sigma _1,\cdots ,\sigma _n,l_{n+1})  $ and $ y\in I_{n+1}(\sigma _1,\cdots ,\sigma _n,r_{n+1}) $, where $ (\sigma _1,\cdots ,\sigma _n,l_{n+1})\in C_{n+1} $ and $ (\sigma _1,\cdots ,\sigma _n,r_{n+1})\in C_{n+1} $. \par
	Without loss of generality, we assume $ x<y$. then $ l_{n+1}>r_{n+1} $, Note that for all $ k\in \mathbb{N} $, $ n+1\ne m_k ,$ otherwise $ l_{n+1}=r_{n+1}=\lfloor \mathrm{e}^{\psi \left( m_k \right)} \rfloor +1  $. Recall that $ x\in J_{n+1}(\sigma _1,\cdots ,\sigma _n,l_{n+1})\cap E_M\left( \psi \right)  $ and $ y\in J_{n+1}(\sigma _1,\cdots ,\sigma _n,r_{n+1})\cap E_M\left( \psi \right)  $, then
	$ | x-y | $ is not less than the gaps between $ J_{n+1}(\sigma _1,\cdots ,\sigma _n,l_{n+1}) $ and $ J_{n+1}(\sigma _1,\cdots ,\sigma _n,r_{n+1}) $. \par
	We consider the following two cases:\\
	$ \left( \mathrm{i} \right)  $ If $ n+2=m_k $ for some $ k\in \mathbb{N}  $.\par
	In this case, $y-x $ is greater than the distance between the left endpoint of the basic interval $ J_{n+1}(\sigma _1,\cdots ,\sigma _n,r_{n+1}) $ and the right endpoint of the basic interval $ J_{n+1}(\sigma _1,\cdots ,\sigma _n,l_{n+1}) $. Let $ t=\lfloor \mathrm{e}^{\psi \left( m_k \right)} \rfloor +1\geqslant 2 $, then
	\begin{align*}
		J_{n+1}(\sigma _1,\cdots ,\sigma _n,l_{n+1})&=I_{n+2}(\sigma _1,\cdots ,\sigma _n,l_{n+1},t) ,\\
		J_{n+1}(\sigma _1,\cdots ,\sigma _n,r_{n+1})&=I_{n+2}(\sigma _1,\cdots ,\sigma _n,r_{n+1},t).
	\end{align*}
	For simplicity, we write $ s_0:=1 $, $ s_k=\sigma _k\left( \sigma _k-1 \right)   $ for $ 1\leqslant k\leqslant n $, recall that $ Q_k=s_0s_1\cdots s_{k-1}s_k $ for $ 1\leqslant k\leqslant n $, then by Proposition \ref{proposition: lenth of In}, we have	
	\begin{align*}
	\big| y-x \big|
	&\geqslant \left( \sum_{k=1}^n{\frac{1}{Q_{k-1}}}\cdot \frac{1}{\sigma _k}+\frac{1}{Q_n}\cdot \frac{1}{r_{n+1}}+\frac{1}{Q_n}\cdot \frac{1}{r_{n+1}\left( r_{n+1}-1 \right)}\cdot \frac{1}{t} \right)    \\
	&\quad -\left( \sum_{k=1}^n{\frac{1}{Q_{k-1}}}\cdot \frac{1}{\sigma _k}+\frac{1}{Q_n}\cdot \frac{1}{l_{n+1}}+\frac{1}{Q_n}\cdot  \frac{1}{l_{n+1}\left( l_{n+1}-1 \right)}\cdot \frac{1}{t} + \right. \\
	& \quad \qquad \left. \frac{1}{Q_n}\cdot \frac{1}{l_{n+1}\left( l_{n+1}-1 \right)}\cdot \frac{1}{t\left( t-1 \right)} \right)  \\
	&=\frac{1}{Q_n}\left( \frac{1}{r_{n+1}}-\frac{1}{l_{n+1}}+\frac{1}{r_{n+1}\left( r_{n+1}-1 \right)}\cdot \frac{1}{t}-\frac{1}{l_{n+1}\left( l_{n+1}-1 \right)}\cdot \frac{1}{t-1} \right)  \\
	& \geqslant \frac{1}{Q_n}\left( \frac{l_{n+1}-r_{n+1}}{r_{n+1}l_{n+1}}+\frac{1}{l_{n+1}\left( l_{n+1}-1 \right)}\left( \frac{1}{t}-\frac{1}{t-1} \right) \right)  \\
	& \geqslant \frac{1}{Q_n}\left( \frac{1}{r_{n+1}l_{n+1}}-\frac{1}{l_{n+1}\left( l_{n+1}-1 \right)}\cdot \frac{1}{2} \right)  \\
	&\geqslant \frac{1}{Q_n}\frac{1}{l_{n+1}\left( l_{n+1}-1 \right)}\cdot \frac{1}{2} \\
	&\geqslant \frac{1}{Q_n}\frac{1}{2M\left( M-1 \right)} \\
	& \geqslant \frac{1}{M^3}\big| I_n(\sigma _1,\cdots ,\sigma _n) \big|.
\end{align*}
	$ \left( \mathrm{ii} \right)  $ If $ n+2 \ne m_k $ for any $ k\in \mathbb{N} $.\par
	In this case, $y-x $ is greater than the distance between the left endpoint of $ J_{n+1}(\sigma _1,\cdots ,\sigma _n,r_{n+1}) $ and the right point of $ J_{n+1}(\sigma _1,\cdots ,\sigma _n,l_{n+1}) $. Note that $ n+2 \ne m_k $ for any $ k\ge1 $, then  the left endpoint of $ J_{n+1}(\sigma _1,\cdots ,\sigma _n,r_{n+1})  $ is just the left endpoint of $ I_{n+2}(\sigma _1,\cdots ,\sigma _n,r_{n+1},M)  $ and the right point of $ J_{n+1}(\sigma _1,\cdots ,\sigma _n,l_{n+1}) $ is the right point of $ I_{n+2}(\sigma _1,\cdots ,\sigma _n,l_{n+1},2) $. Similar to case  $ \left( \mathrm{i} \right)  $, we have
	\begin{align*}
	\big| y-x \big|
	& \geqslant
%	\left( \sum_{k=1}^n{\frac{1}{Q_{k-1}}}\cdot \frac{1}{\sigma _k}+\frac{1}{Q_n}\cdot \frac{1}{r_{n+1}}+\frac{1}{Q_n}\cdot \frac{1}{r_{n+1}\left( r_{n+1}-1 \right)}\cdot \frac{1}{M} \right)  \\
%	& \quad -\left( \sum_{k=1}^n{\frac{1}{Q_{k-1}}}\cdot \frac{1}{\sigma _k}+\frac{1}{Q_n}\cdot \frac{1}{l_{n+1}}+\frac{1}{Q_n}\cdot \frac{1}{l_{n+1}\left( l_{n+1}-1 \right)}\cdot \frac{1}{2}+ \right.\\
%	& \quad \qquad \left.\frac{1}{Q_n}\cdot \frac{1}{l_{n+1}\left( l_{n+1}-1 \right)}\cdot \frac{1}{2\cdot \left( 2-1 \right)} \right) \\
%& =
	\frac{1}{Q_n}\left( \frac{1}{r_{n+1}}-\frac{1}{l_{n+1}}+\frac{1}{r_{n+1}\left( r_{n+1}-1 \right)}\cdot \frac{1}{M}-\frac{1}{l_{n+1}\left( l_{n+1}-1 \right)} \right) \\
%	& \geqslant \frac{1}{Q_n}\left( \frac{l_{n+1}-r_{n+1}}{r_{n+1}l_{n+1}}+\frac{1}{l_{n+1}\left( l_{n+1}-1 \right)}\left( \frac{1}{M}-1 \right) \right) \\
%	& \geqslant \frac{1}{Q_n}\left( \frac{1}{r_{n+1}l_{n+1}}+\frac{1}{l_{n+1}\left( l_{n+1}-1 \right)}\cdot \left( \frac{1}{M}-1 \right) \right)  \\
	& \geqslant \frac{1}{Q_n}\frac{1}{l_{n+1}\left( l_{n+1}-1 \right)}\cdot \frac{1}{M}  \\
	& \geqslant \frac{1}{Q_n}\frac{1}{M^2\left( M-1 \right)} \\
	& \geqslant \frac{1}{M^3}\big| I_n(\sigma _1,\cdots ,\sigma _n) \big|.
\end{align*}
\end{proof}
Recall that
\begin{align*}
	E_M=\Big\{ x\in \left( 0,1 \right] :2\leqslant d_n\left( x \right) \leqslant M\,\, {\,\,\rm{for} \,\,\rm{any}}\,\ n\in \mathbb{N} \Big\} 	
\end{align*}
and write
\begin{align*}
\left[ d_1,d_2,\cdots d_n \right] =\frac{1}{d_1}+\frac{1}{d_1\left( d_1-1 \right) d_2}+...+\frac{1}{d_1\left( d_1-1 \right) \cdots d_{n-1}\left( d_{n-1}-1 \right) d_n}.	
\end{align*}
Define a map $ f:E_M\left( \psi \right) \rightarrow E_M $ as follows: for any $ x=\left[ x_1,x_2,\cdots \right] \in E_M\left( \psi \right)  $, let
\begin{align*}
f\left( x \right) =\underset{n\rightarrow \infty}{\lim}\left[ x_1,\cdots ,x_n \right] ^*,
\end{align*}
where $ \left[ x_1,\cdots ,x_n \right] ^* $ is the real number obtained  by eliminating the terms $ \left\{ x _{m_k}: 1\leqslant k\leqslant t\left( n \right) \right\}  $ in $ \left[ x_1,\cdots ,x_n \right]$. For any $ x=\left[ x_1,x_2,\cdots \right] \in E_M\left( \psi \right)   $, $ y=\left[ y_1,y_2,\cdots \right] \in E_M\left( \psi \right)  $, let $ m $ be the greatest integer such that $ x_i=y_i$ for $ 1\leqslant i\leqslant m  $, it is easy to see that for any $  0<\epsilon <1 $, if
\begin{align}\label{eq:hype}
	\big| x-y \big|<\frac{1}{M^3}\underset{(\sigma _1,\cdots ,\sigma _{N_1})\in C_{N_1}}{\min}\left\{ \big| I_{N_1}(\sigma _1,\cdots ,\sigma _{N_1}) \big| \right\} ,	
\end{align}
then $ m>N_1  $, where $ N_1 $ is the same as $ N_1 $ in Lemma \ref{lem In<In-}. Otherwise, by Lemma \ref{lem: 3.3}, one has
\begin{align*}
	\big| x-y \big|\geqslant \frac{1}{M^3}\big| I_m(\sigma _1,\cdots ,\sigma _m) \big|\geqslant \frac{1}{M^3}\underset{(\sigma _1,\cdots ,\sigma _{N_1})\in C_{N_1}}{\min}\left\{ \big| I_{N_1}(\sigma _1,\cdots ,\sigma _{N_1}) \big| \right\} ,
\end{align*}
which contradicts the hypothesis \eqref{eq:hype}. Furthermore, according to Lemma \ref{lem In<In-} and Lemma \ref{lem: 3.3}, we have
\begin{align}\label{eq:Holder}
	\big| f\left( x \right) -f\left( y \right) \big|\leqslant \big| I_{m}^{*}(\sigma _1,\cdots ,\sigma _m)\big|\leqslant \big| I_m(\sigma _1,\cdots ,\sigma _m) \big|^{\frac{1}{1+\epsilon}}\leqslant M^{\frac{3}{1+\epsilon}}\cdot \big| x-y \big|^{\frac{1}{1+\epsilon}}.
\end{align}
\par Take a sufficiently large $ k $ such that for any $ \left( \sigma _1,\cdots ,\sigma _k \right) \in C_k $, one has
\begin{align*}
	\big| I_k(\sigma _1,\cdots ,\sigma _k) \big|<\frac{1}{M^3}\underset{(\sigma _1,\cdots ,\sigma _{N_1})\in C_{N_1}}{\min}\left\{ \big| I_{N_1}(\sigma _1,\cdots ,\sigma _{N_1}) \big| \right\} .
\end{align*}
It is clear that
\begin{align*}
	E_M\left( \psi \right) =\bigsqcup_{(\sigma _1,\cdots ,\sigma _k)\in C_k}{E_M\left( \psi \right) \cap I_k(\sigma _1,\cdots ,\sigma _k)}.
\end{align*}
For simplicity, we write $ F\left( \sigma _1,\cdots ,\sigma _k \right) =E_M\left( \psi \right) \cap I_k(\sigma _1,\cdots ,\sigma _k) $, then by the countable stability of Hausdorff dimension, we have
\begin{align}\label{eq:sup of dim}
	\mathrm{dim}E_M\left( \psi \right) =\underset{\left( \sigma _1,\cdots ,\sigma _{k} \right) \in C_k}{\mathrm{sup}}\mathrm{dim_H}\,F\left( \sigma _1,\cdots ,\sigma _k \right) .
\end{align}
For any $ \left( \sigma _1,\cdots ,\sigma _k \right) \in C_k $, by Proposition \ref{lem:CSofHD} and \eqref{eq:Holder}, we have
\begin{align*}
	\mathrm{dim_H}\,F\left( \sigma _1,\cdots ,\sigma _k \right) \geqslant \frac{1}{1+\epsilon}\,\mathrm{dim_H}\,f\left( F\left( \sigma _1,\cdots ,\sigma _k \right) \right)  .
\end{align*}
Then by \eqref{eq:sup of dim}, one has
\begin{align}\label{eq:dim of Em(phi) fangsuo}
	\mathrm{dim_H}\,E_M\left( \psi \right)
	&\geqslant \underset{\left( \sigma _1,\cdots ,\sigma _k \right) \in C_k}{\mathrm{sup}}\frac{1}{1+\epsilon}\,\mathrm{dim_H}\,f\left( F\left( \sigma _1,\cdots ,\sigma _k \right) \right).
%	  \notag \\
%	&=\frac{1}{1+\epsilon}\underset{\left( \sigma _1,\cdots ,\sigma _k \right) \in C_k}{\mathrm{sup}}\mathrm{dim_H}\,f\left( F\left( \sigma _1,\cdots ,\sigma _k \right) \right) .
\end{align}
Note that
\begin{align*}
	E_M=f\left( E_M\left( \psi \right) \right) =f\Big( \bigsqcup_{\left( \sigma _1,\cdots ,\sigma _k \right) \in C_k}{F\left( \sigma _1,\cdots ,\sigma _k \right)} \Big) \subseteq  \bigsqcup_{\left( \sigma _1,\cdots ,\sigma _k \right) \in C_k}{f\left( F\left( \sigma _1,\cdots ,\sigma _k \right) \right)},
\end{align*}
then
\begin{align}\label{eq: dim of em fangsuo}
	\mathrm{dim_H}\,E_M\leqslant \mathrm{dim_H}\,\bigsqcup_{\left( \sigma _1,\cdots ,\sigma _k \right) \in C_k}{f\left( F\left( \sigma _1,\cdots ,\sigma _k \right) \right)}=\underset{\left( \sigma _1,\cdots ,\sigma _k \right) \in C_k}{\mathrm{sup}}\mathrm{dim_H}\,f\left( F\left( \sigma _1,\cdots ,\sigma _k \right) \right) .
\end{align}
Recall that $ E_M\left( \psi \right) \subseteq E_{\mathrm{sup}}\left( \psi \right)  $, by combining \eqref{eq:dim of Em(phi) fangsuo}, \eqref{eq: dim of em fangsuo}, we have
\begin{align*}
	\mathrm{dim_H}\,E_{\mathrm{sup}}\left( \psi \right)  \geqslant \mathrm{dim_H}\,E_M\left( \psi \right) \geqslant \frac{1}{1+\epsilon}\,\mathrm{dim_H}\,E_M.
\end{align*}

Let $ \epsilon \rightarrow 0^+ $, then
\begin{align*}
	\mathrm{dim_H}\,E_{\mathrm{sup}}\left( \psi \right)  \geqslant \mathrm{dim_H}\,E_M.
\end{align*}
Moreover, by Lemma \ref{lem: the hausdorff dimension of Em} and Lemma \ref{lem:estimate of  HD } and let $ M\rightarrow \infty  $, we have
\begin{align*}
	\mathrm{dim_H}\,E_{\mathrm{sup}}\left( \psi \right)  \geqslant \underset{M\geqslant 2}{\mathrm{sup}}\mathrm{dim_H}\,E_M=1 .
\end{align*}
{\bf Case \Rmnum{2}: }$ \liminf\limits_{n\rightarrow \infty}\frac{\psi \left( n \right)}{n}=\alpha \,\,\left( 0<\alpha <\infty \right)  .$ \par
We split the proof into two parts.\par
\textbf{Upper bound}  For any $x\in E_{\mathrm{sup}}\left( \psi \right)    $, it holds that $ \limsup\limits_{n\rightarrow \infty}\frac{\log d_n\left( x \right)}{\psi \left( n \right)}=1   $, then for any $ 0<\epsilon <\min \left\{ \alpha ,1 \right\}  $, we have
\begin{align*}
	d_n\left( x \right) \geqslant \mathrm{e}^{\psi \left( n \right) \left( 1-\epsilon \right)}
\end{align*}
holds for infinitely many $ n $. Since $ \underset{n\rightarrow \infty}{\lim  \mathrm{inf}}\frac{\psi \left( n \right)}{n}=\alpha  $, then there exsits a positive integer $ \widetilde{N}$ such that for any $ n\geqslant \widetilde{N} $, one has
\begin{align*}
	\psi \left( n \right) >n\left( \alpha -\epsilon \right) ,
\end{align*}
thus,
\begin{align*}
	d_n\left( x \right) \geqslant \mathrm{e}^{\psi \left( n \right) \left( 1-\epsilon \right)} \geqslant \mathrm{e}^{n\left( \alpha -\epsilon \right) \left( 1-\epsilon \right)}
\end{align*}
holds for for infinitely many $ n\in \mathbb{N} $.
This show that
\begin{align*}
	E_{\mathrm{sup}}\left( \psi \right)  \subseteq \Big\{ x\in \left( 0,1 \right] \,\,:d_n\left( x \right) \geqslant \mathrm{e}^{n\left( \alpha -\epsilon \right) \left( 1-\epsilon \right)}  \,\,{\rm{for}\,\, \rm{infinitely}\,\,\rm{many}}\,\,n\in \mathbb{N} \Big\} .
\end{align*}
Recall that
\begin{align*}
		H\left( \alpha \right) =\mathrm{dim_H}\,\Big\{ x\in \left( 0,1 \right] \,\,: d_n\left( x \right) \geqslant \mathrm{e}^{\alpha n}\,\,{\rm{for}\,\, \rm{infinitely}\,\,\rm{many}}\,\,n\in \mathbb{N} \Big\} ,
\end{align*}
then
\begin{align*}
\mathrm{dim_H}\,E_{\mathrm{sup}}\left( \psi \right)  \leqslant H\left( \left( \alpha -\epsilon \right) \left( 1-\epsilon \right) \right) .	
\end{align*}
Let $ \epsilon \rightarrow 0^+ $ and by Lemma \ref{lem:H(alpha)}, we have
\begin{align*}
	\mathrm{dim_H}\,E_{\mathrm{sup}}\left( \psi \right)  \leqslant H\left( \alpha \right) .
\end{align*}
\par \textbf{Lower bound} 	Since $ \liminf\limits_{n\rightarrow \infty}\frac{\psi \left( n \right)}{n}=\alpha  $ and $ 0<\alpha <\infty  $, it is easy to see that
\begin{align*}
\liminf \limits_{n\rightarrow \infty}\frac{\psi \left( n \right)}{n\alpha}=\frac{1}{\alpha}\liminf\limits_{n\rightarrow \infty}\frac{\psi \left( n \right)}{n}=1.
\end{align*}

Let  $ \left\{ n_k \right\} _{k\geqslant 1} $ be a subsequence of $ \mathbb{N}  $ such that
\begin{align*}
\underset{k\rightarrow \infty}{\lim}\frac{\psi \left( n_k \right)}{n_k\alpha}=\liminf \limits_{n\rightarrow \infty}\frac{\psi \left( n \right)}{n\alpha}=1.
\end{align*}
Choose a sufficiently sparse subsequence $ \left\{ n_{k_i} \right\} _{i\geqslant 1} $ of  $ \left\{ n_k \right\} _{k\geqslant 1} $ (for simplicity, we still denote by  $ \left\{ n_k \right\} _{k\geqslant 1} $) such that
\begin{align*}
n_1=1\,\,\text{and}\,\,n_1+n_2+\cdots +n_k\leqslant \frac{1}{k+1}n_{k+1}\,\,\left( k\geqslant 2 \right).
%\rm{for}\,\, k\in \mathbb{N} _{\geqslant 2}.
\end{align*}
Fix an integer $ M\geqslant 2 $ and for any $ B>1 $, define
\begin{align*}
	F_M\left( B \right) ={} &\Big\{x\in \left( 0,1 \right] :\lfloor B^{n_{k}} \rfloor +2\leqslant d_{n_{k}}\left( x \right) \leqslant 2\lfloor B^{n_{k}} \rfloor +1\,\,{\rm{for\ any}}\,\,k\in \mathbb{N}  \,\,\textup{and} \\
	& \qquad \qquad \qquad 2\leqslant d_j\left( x \right) \leqslant M,\,\,{\rm{if}}\,\, j \ne n_{k}\,\,{\rm{for\ any}}\,\,k\in \mathbb{N}  \Big\}
\end{align*}
and
\begin{align*}
	F_{\alpha}\left( \psi \right) =\Big\{ x\in \left( 0,1 \right] :\underset{n\rightarrow \infty}{\lim  \mathrm{sup}}\frac{\log d_n\left( x \right)}{\psi \left( n \right)}\leqslant 1\,\,\,\text{and}\,\,\,\limsup_{k\rightarrow \infty}\frac{\log d_{n_{k}}\left( x \right)}{\psi \left( n_{k} \right)}\,\,=1\,\, \Big\} .
\end{align*}
For any $ x\in F_M\left(\mathrm{e}^{\alpha} \right)  $, it is obvious that
\begin{align*}
	\underset{n\rightarrow \infty}{\lim  \mathrm{sup}}\frac{\log d_n\left( x \right)}{n\alpha}=1\,\,\,\text{and}\,\, \underset{k\rightarrow \infty}{\lim}\frac{\log d_{n_{k}}\left( x \right)}{n_{k}\alpha}\,\,=1 .
\end{align*}
Thus,
\begin{align*}
	\limsup_{n\rightarrow \infty}\frac{\log d_n\left( x \right)}{\psi \left( n \right)}
%	&=\underset{n\rightarrow \infty}{\lim  \mathrm{sup}}\frac{\log d_n\left( x \right)}{\psi \left( n \right)}\cdot 1 \\
	&=\limsup_{n\rightarrow \infty}\frac{\log d_n\left( x \right)}{\psi \left( n \right)}\cdot \liminf_{n\rightarrow \infty}\frac{\psi \left( n \right)}{n\alpha} \\
	& \leqslant \limsup_{n\rightarrow \infty}\frac{\log d_n\left( x \right)}{n\alpha}\\
	&=1
\end{align*}
and
\begin{align*}
	\limsup_{k\rightarrow \infty}\frac{\log d_{n_{k}}\left( x \right)}{\psi \left( n_{k} \right)}\,\,
%	&=\underset{k\rightarrow \infty}{\lim\mathrm{sup}}\frac{\log d_{n_{k}}\left( x \right)}{\psi \left( n_{k} \right)}\,\,\cdot 1 \\
	&=\limsup_{k\rightarrow \infty}\frac{\log d_{n_{k}}\left( x \right)}{\psi \left( n_{k} \right)}\,\,. \underset{k\rightarrow \infty}{\lim}\frac{\psi \left( n_{k} \right)}{n_{k}\alpha} \\
	&=\limsup_{k\rightarrow \infty}\frac{\log d_{n_{k}}\left( x \right)}{n_{k}\alpha}\\
	&=1 .
\end{align*}
Hence
\begin{align} \label{eq:FM(alpha)}
	F_M\left( \mathrm{e}^{\alpha} \right) \subseteq F_{\alpha}\left( \psi \right) .
\end{align}

Let $ G_M\left( B \right)  $ be the unique solution of the equation
\begin{align*}
	\sum_{k=2}^M{\left( \frac{1}{Bk\left( k-1 \right)} \right) ^s=1},	
\end{align*}
then according to the proof of Theorem 3.2 of \cite{shenHausdorffDimensionSet2017}, for any $ B>1 $, it can be obtained that
\begin{align}\label{eq:H(M,B)}
	\mathrm{dim_H}\,F_M\left( B \right) \geqslant  G_M\left( B \right) \,\,\text{and}\,\,\lim_{M\rightarrow \infty}  G_M\left( B \right) =G\left( B \right) ,
\end{align}
where $ G\left( B \right)  $ is defined in Lemma \ref{lem:H(alpha)}. It is easy to see that $ F_{\alpha}\left( \psi \right) \subseteq E_{\mathrm{sup}}\left( \psi \right)  $,
%\begin{align*}
%	 F_{\alpha}\left( \psi \right) \subseteq E_{\mathrm{sup}}\left( \psi \right)   ,
%\end{align*}
then by \eqref{eq:FM(alpha)}, we have
\begin{align*}
	\mathrm{dim_H}\,E_{\mathrm{sup}}\left( \psi \right)  \geqslant \mathrm{dim_H}\,F_{\alpha}\left( \psi \right) \geqslant \mathrm{dim_H}\,F_M\left( \mathrm{e}^{\alpha}\right) \geqslant G_M\left( \mathrm{e}^{\alpha} \right).	
\end{align*}

Let $ M\rightarrow \infty  $, it follows from \eqref{eq:H(M,B)} and Lemma \ref{lem:H(alpha)} that
\begin{align*}
	\mathrm{dim_H}\,E_{\mathrm{sup}}\left( \psi \right)  \geqslant \lim_{M\rightarrow \infty} G_M\left( \mathrm{e}^{\alpha} \right)=G\left({\mathrm{e}^{\alpha}}\right) =H\left( \alpha \right) .
\end{align*}
{\bf Case \Rmnum{3}: }$ \underset{n\rightarrow \infty}{\lim}\,\frac{\psi \left( n \right)}{n}=\infty .  $ \par
%We firstly estimate the upper bound of Hausdorff dimension of $ E_{\mathrm{sup}}\left( \psi \right) $. \par
\textbf{Upper bound }
For any $ 0<\epsilon <1 $, define
\begin{align*}
	\hat{E}\left( \psi ,\epsilon \right)  =\Big\{ x\in \left( 0,1 \right] \,\,: d_n\left( x \right) \geqslant\mathrm{e}^{\psi \left( n \right) \left( 1-\epsilon \right)}\,\,{\rm{for\ infinitely\ many}}\,\,n\in \mathbb{N} \Big\} ,
\end{align*}
it is easy to see that
\begin{align*}
E_{\mathrm{sup}}\left( \psi \right)  \subseteq \hat{E}\left( \psi ,\epsilon \right).  	
\end{align*}
Note that $ \lim\limits_{n\rightarrow \infty} \frac{\psi \left( n \right)}{n}=\infty   $, we have
\begin{align*}
	\lim_{n\rightarrow \infty} \frac{\log  \mathrm{e}^{\psi \left( n \right) \left( 1-\epsilon \right)}}{n} =\lim_{n\rightarrow \infty} \frac{ \psi \left( n \right) \left( 1-\epsilon \right) }{n}=\infty
\end{align*}
and
\begin{align*}
    \liminf \limits_{n\rightarrow \infty}\frac{\log  \log  \mathrm{e}^{\psi \left( n \right) \left( 1-\epsilon \right)}}{n}
    =\liminf \limits_{n\rightarrow \infty}\frac{\log \psi \left( n \right) \left( 1-\epsilon \right) }{n}=\liminf \limits_{n\rightarrow \infty}\frac{\log \psi \left( n \right)}{n}.	
\end{align*}
Recall that
\begin{align*}
		\log A=\liminf \limits_{n\rightarrow \infty}\frac{\log \psi \left( n \right)}{n},
\end{align*}
then it follows from  Theorem 4.2 of \cite{shenHausdorffDimensionSet2017} that
\begin{align*}
	\mathrm{dim_H}\,E_{\mathrm{sup}}\left( \psi \right)  \leqslant \mathrm{dim_H}\,\hat{E}\left( \psi ,\epsilon \right)  =\frac{1}{1+A}.
\end{align*}
\par

\textbf{Lower bound}
 We obtain the lower bound of the Hausdorff dimension of $ E_{\mathrm{sup}}\left( \psi \right) $ by constructing a suitable Cantor subset of $ E_{\mathrm{sup}}\left( \psi \right) $.

 Define $ \theta :\mathbb{N} \rightarrow \mathbb{R} ^+ $  as follows:
 \begin{align*}
	\theta \left( n \right) =\underset{k\geqslant n}{\min}\psi \left( k \right) .
\end{align*}
Since $ \lim\limits_{n\rightarrow \infty} \psi \left( n \right)=\infty  $, $ \theta \left( n \right)  $ is well defined .
Secondly, for any $ \epsilon >0 $, define a sequence $ \left\{ r_n \right\} _{n\geqslant 1} $ as follows:
\begin{align}\label{eq:r(n)}
	r_1=e^{\theta \left( 1 \right)} \,\,\text{and}\,\,r_n=\min \left\{ \mathrm{e}^{\theta \left( n \right)} ,\prod_{i=1}^{n-1}{r_{i}^{A-1+\epsilon}} \right\} \,\,\left(  n\geqslant 2 \right) .
\end{align}
It follows from the proof of Theorem 1.1 of \cite{lulufangExceptionalSetsBorel2021} that \begin{align}\label{eq:limsup r(n)}
	\mathop {\lim\mathrm{sup}} \limits_{n\rightarrow \infty}\frac{\log r_n}{\psi \left( n \right)}=1\,\,,\,\,\lim_{n\rightarrow \infty} r_n=\infty \,\,\text{and}\,\,\mathop {\lim\mathrm{sup}} \limits_{n\rightarrow \infty}\frac{\log r_{n+1}}{\log (r_1\cdots r_n)}\leqslant A-1+\epsilon .
\end{align}

We use the sequence $ \left\{ r_n \right\} _{n\geqslant 1} $ to construct the Cantor subset of $ E_{\mathrm{sup}}\left( \psi \right)  $. By \eqref{eq:limsup r(n)}, there exsits a positive interger $ M $ such that $ M\lfloor r_n+1 \rfloor \geqslant 4 $ for all $ n\in \mathbb{N}  $. Define
\begin{align*}
	E\big( \left\{ r_n \right\} _{n\geqslant 1} \big)  =\Big\{ x\in \left( 0,1 \right] :M\lfloor r_n+1 \rfloor \leqslant d_n\left( x \right) \leqslant 2M\lfloor r_n+1 \rfloor -1 \, \text{for}\,\,\text{any} \,\,n\in \mathbb{N} \Big\} .
\end{align*}
For any $ x\in E\big( \left\{ r_n \right\} _{n\geqslant 1} \big)    $ , by  \eqref{eq:limsup r(n)} , one has
\begin{align}\label{eq:baohan}
	1=\mathop {\lim  \mathrm{sup}} \limits_{n\rightarrow \infty}\frac{\log Mr_n}{\psi \left( n \right)}\leqslant \mathop {\lim  \mathrm{sup}} \limits_{n\rightarrow \infty}\frac{\log d_n\left( x \right)}{\psi \left( n \right)}\leqslant \mathop {\lim  \mathrm{sup}} \limits_{n\rightarrow \infty}\frac{\log 2M\left( r_n+1 \right)}{\psi \left( n \right)}=1.
\end{align}
%hence
%\begin{align*}
%	\mathop {\lim  \mathrm{sup}} \limits_{n\rightarrow \infty}\frac{\log d_n\left( x \right)}{\psi \left( n \right)}=1.
%\end{align*}
This shows that
\begin{align*}
	E\big( \left\{ r_n \right\} _{n\geqslant 1} \big)  \subseteq E_{\mathrm{sup}}\left( \psi \right)  .
\end{align*}
Note that
\begin{align*}
	\limsup_{n\rightarrow \infty}\frac{\log M\lfloor r_{n+1}+1 \rfloor}{\log M^n \cdot \prod\limits_{i=1}^n{\lfloor r_i+1 \rfloor}}
	& \leqslant \limsup \limits_{n\rightarrow \infty}\frac{\log M\left( r_{n+1}+1 \right)}{\log M^n\cdot\prod\limits_{i=1}^n{r_i}}\\
	&=\limsup \limits_{n\rightarrow \infty}\frac{\log M+\log \left( r_{n+1}+1 \right)}{n\log M+\log r_1\cdots r_n}\\
	&=\limsup \limits_{n\rightarrow \infty}\frac{\log \left( r_{n+1}+1 \right)}{n\log M+\log r_1\cdots r_n}\\
	&=\limsup \limits_{n\rightarrow \infty}\frac{\log \left( r_{n+1}+1 \right)}{\log r_1\cdots r_n}\cdot \frac{1}{1+\frac{n\log M}{\log r_1\cdots r_n}}\\
	&=\limsup \limits_{n\rightarrow \infty}\frac{\log \left( r_{n+1}+1 \right)}{\log r_1\cdots r_n} \\
	&\leqslant A-1+\epsilon .
\end{align*}
Then by Lemma \ref{lem:key lem}, we have
\begin{align}
	\label{eq:Es{r(n)}}
	\mathrm{dim_H}\,E\big( \left\{ r_n \right\} _{n\geqslant 1} \big)
	&=\liminf \limits_{n\rightarrow \infty}\frac{\log M^n\cdot\prod\limits_{i=1}^n{\lfloor r_i+1 \rfloor}}{2\log M^n\cdot\prod\limits_{i=1}^n{\lfloor r_i+1 \rfloor}+\log M\lfloor r_{n+1}+1 \rfloor} \notag \\
	&=\frac{1}{2+\limsup \limits_{n\rightarrow \infty}\dfrac{\log M\lfloor r_{n+1}+1 \rfloor}{\log M^n\cdot\prod\limits_{i=1}^n{\lfloor r_i+1 \rfloor}}} \geqslant \frac{1}{1+A+\epsilon}.
\end{align}
Thus,
\begin{align*}
	\mathrm{dim_H}\,E_{\mathrm{sup}}\left( \psi \right)  \geqslant \mathrm{dim_H}\,E\big( \left\{ r_n \right\} _{n\geqslant 1} \big)  \geqslant \frac{1}{1+A+\epsilon}.
\end{align*}
Let $ \epsilon \rightarrow 0^+ $, we have
\begin{align*}
	\mathrm{dim_H}\,E_{\mathrm{sup}}\left( \psi \right)  \geqslant \frac{1}{1+A}.
\end{align*}

\section{PROOF OF THEOREM \ref{Thm 1.2} AND THEOREM \ref{Thm 1.3}}
In this section, we complete the proofs of Theorem \ref{Thm 1.2} and Theorem \ref{Thm 1.3}.
\subsection{Proof of Theorem \ref{Thm 1.2}}
We split the proof into two parts.\par
\textbf{Lower bound} We construct a suitable Cantor subset of $ E\left( \psi \right)  $ to get the lower bound of the Hausdorff dimension of $ E\left( \psi \right)  $. Since $ \lim\limits_{n\rightarrow \infty} \psi \left( n \right) =\infty  $ and $ \psi \left( n \right) >0 $ for any $ n\in\mathbb{N}  $, there exsits an interger $ M $ such that $ M\lfloor \mathrm{e}^{\psi \left( n \right)}+1 \rfloor \geqslant 4  $ holds for any $ n\in \mathbb{N}  $. Define
\begin{align*}
	W \left( \psi \right) =\left\{ x\in \left( 0,1 \right] :M\lfloor \mathrm{e}^{\psi \left( n \right)}+1 \rfloor \leqslant d_n\left( x \right) \leqslant 2M\lfloor \mathrm{e}^{\psi \left( n \right)}+1 \rfloor -1  \, \,{\rm{for}\,\, \rm{any}}\,\, n\in \mathbb{N} \right\} .
\end{align*}
Similar to \eqref{eq:baohan}, it is easy to see that
\begin{align*}
	W \left( \psi \right) \subseteq E\left( \psi \right) .
\end{align*}
Therefore by Lemma \ref{lem:key lem}, similar to \eqref{eq:Es{r(n)}}, we have
\begin{align*}
	\mathrm{dim_H}\,W \left( \psi \right) &\geqslant \dfrac{1}{2+\mathop {\lim\mathrm{sup}} \limits_{n\rightarrow \infty}\frac{\log ( e^{\psi \left( n+1 \right)}+1 )}{\log  e^{\psi \left( 1 \right) +\cdots +\psi \left( n \right)}}}
	=\frac{1}{2+\mathop {\lim  \mathrm{sup}} \limits_{n\rightarrow \infty}\frac{\psi \left( n+1 \right)}{\psi \left( 1 \right) +\cdots +\psi \left( n \right)}}.
\end{align*}
Recall that
\begin{align*}
	\eta =\mathop {\lim  \mathrm{sup}} \limits_{n\rightarrow \infty}\frac{\psi \left( n+1 \right)}{\psi \left( 1 \right) +\cdots +\psi \left( n \right)},
\end{align*}
hence
\begin{align*}
	\mathrm{dim_H}\,E\left( \psi \right) \geqslant \mathrm{dim_H}\,W \left( \psi \right) \geqslant \frac{1}{2+\eta }.
\end{align*}
\par
\textbf{Upper bound}  For any $ 0<\epsilon <1 $ and $ r\in \mathbb{N}  $, we define
\begin{align*}
	E_{\psi}\left( r,\epsilon \right)  =\Big\{ x\in \left( 0,1 \right] :\mathrm{e}^{\psi \left( n \right) \left( 1-\epsilon \right)} +1\leqslant d_n\left( x \right) \leqslant \mathrm{e}^{\psi \left( n \right) \left( 1+\epsilon \right)} +1\, \,{\rm{for}\,\, \rm{any}}\,\,  n\geqslant r \Big\} .
\end{align*}
%For any $ x\in E\left( \psi \right)  $, one has
%\begin{align*}
%	 \lim_{n\rightarrow \infty} \frac{\log \left( d_n\left( x \right) -1 \right)}{\psi \left( n \right)}=1 ,
%\end{align*}
%then there exists a positive integer $ \tilde{N}   $ such that for any $ n>\tilde{N} $, one has
%\begin{align*}
%\mathrm{e}^{\psi \left( n \right) \left( 1-\epsilon \right)}+1<d_n\left( x \right) <\mathrm{e}^{\psi \left( n \right) \left( 1+\epsilon \right)}+1 .
%\end{align*}
%So
It is easy to see that
\begin{align*}
	E\left( \psi \right) \subseteq \bigcup_{r=1}^{\infty}{E_{\psi}\left( r,\epsilon \right) } 	,
\end{align*}
then we have
\begin{align}
\label{eq:Hd of E(phi) fangsuo}
	\mathrm{dim_H}\,E\left( \psi \right) \leqslant \mathrm{dim_H}\,\bigcup_{r=1}^{\infty}{E_{\psi}\left( r,\epsilon \right)}=\mathop {\mathrm{sup}} \limits_{r\geqslant 1}\mathrm{dim_H}\,E_{\psi}\left( r,\epsilon \right)  .	
\end{align}

Now we consider the Hausdorff dimension of $ E_{\psi}\left( r,\epsilon \right)   $ for any $ r\in \mathbb{N}  $. Let $ M\geqslant 1 $ be an integer, and define $ 	D_n\left( \epsilon ,M,r \right)  $ as the collection of sequence $ \left( d_1,\cdots ,d_n \right)  $ satisfying
\begin{align*}
	\begin{cases}
		 2\leqslant d_k\leqslant M+1 & \text{if }  1\leqslant  k < r,\\
	\mathrm{e}^{\psi \left( k \right) \left( 1-\epsilon \right)}+1\leqslant d_k\leqslant \mathrm{e}^{\psi \left( k \right) \left( 1+\epsilon \right)}+1& \text{if } r \leqslant  k \leqslant  n.
	\end{cases}
\end{align*}
By the construction of $ 	D_n\left( \epsilon ,M,r \right)  $, it is clear that
\begin{align}\label{eq:number of D(e,m)}
	\#D_n\left( \epsilon ,M,r \right)  &\leqslant M^{r-1}\prod_{k=r}^n{2\epsilon \psi \left( k \right)}\cdot \mathrm{e}^{\left( 1+\epsilon \right) \psi \left( k \right)}.
\end{align}
For any $\left( \sigma _1,\cdots ,\sigma _n \right) \in D_n\left( \epsilon ,M,r \right)   $, put
\begin{align*}
	J_n(\sigma _1,\cdots ,\sigma _n)=\bigcup_{ (\sigma _1,\cdots ,\sigma _n,\sigma _{n+1})\in D_{n+1}\left( \epsilon ,M,r \right) }{I_{n+1}(\sigma _1,\cdots ,\sigma _n,\sigma _{n+1})}.
\end{align*}
%where the union is taken over all $ \sigma _{n+1} $ such that $ (\sigma _1,\cdots ,\sigma _n,\sigma _{n+1})\in D_{n+1}\left( \epsilon ,M,r \right)  $.
We define
\begin{align*}
	\tilde{E}_{\psi ,r,\epsilon}\left( M \right)    ={}
	&\Big\{ x\in \left( 0,1 \right] :\mathrm{e}^{\psi \left( k \right) \left( 1-\epsilon \right)} +1\leqslant d_k\left( x \right) \leqslant \mathrm{e}^{\psi \left( k \right) \left( 1+\epsilon \right)} +1 \,\,\text{for}\,\,{\rm{any}}\ k\geqslant r \\ &\qquad\qquad\quad\text{and}\,\,2\leqslant d_k\left( x \right) \leqslant M+1\,\,\text{for}\  1\leqslant k<r \Big\} .
\end{align*}
Clearly,
\begin{align*}
	E_{\psi}\left( r,\epsilon \right) \subseteq \bigcup_{M=1}^{\infty}{\tilde{E}_{\psi ,r,\epsilon}\left( M \right)},
\end{align*}
then
\begin{align}\label{eq: dim E(phi) of sup}
	\mathrm{dim_H}\,E_{\psi}\left( r,\epsilon \right) \leqslant \mathrm{dim_H}\,\bigcup_{M=1}^{\infty}{\tilde{E}_{\psi ,r,\epsilon}\left( M \right)}=\underset{M\geqslant 1}{\mathrm{sup}}\mathrm{dim_H}\,\tilde{E}_{\psi ,r,\epsilon}\left( M \right) .
\end{align}
For any $ M\geqslant 1 $, now we estimate the Hausdorff dimension of $ \tilde{E}_{\psi ,r,\epsilon}\left( M \right)   $. It is clear that  for any $ n\geqslant r $, one has
\begin{align*}
	\tilde{E}_{\psi ,r,\epsilon}\left( M \right)  \subseteq  \bigcup_{\left( \sigma _1,\cdots ,\sigma _n \right) \in D_n\left( \epsilon ,M,r \right)}{J_n(\sigma _1,\cdots ,\sigma _n)}.
\end{align*}
By Propositon \ref{proposition: lenth of In}, we have
\begin{align*}
	%\label{eq:absolute of I(n+1)}
	\big| I_{n+1}(\sigma _1,\cdots ,\sigma _n,\sigma _{n+1}) \big|=\frac{1}{Q_{n+1}(\sigma _1,\cdots ,\sigma _n,\sigma _{n+1})}\leqslant \frac{1}{\prod\limits_{i=1}^{n+1}{\left( \sigma _i-1 \right) ^2}},
\end{align*}
then
\begin{align}\label{eq:Jn}
	\big| J_n(\sigma _1,\cdots ,\sigma _n) \big|
	& \leqslant \sum_{\sigma _{n+1} \geqslant \mathrm{e}^{\psi \left( n+1 \right) \left( 1-\epsilon \right)}+1}{\big| I_{n+1}(\sigma _1,\cdots ,\sigma _n,\sigma _{n+1}) \big|} \notag \\
	& \leqslant \sum_{\sigma _{n+1}\geqslant \mathrm{e}^{\psi \left( n+1 \right) \left( 1-\epsilon \right)}+1}{\prod\limits_{i=1}^{n+1}{\frac{1}{\left( \sigma _i-1 \right) ^2}}} \notag  \\
	& =\frac{1}{\prod\limits_{i=1}^n{\left( \sigma _i-1 \right) ^2}} \sum_{\sigma _{n+1}\geqslant \mathrm{e}^{\psi \left( n+1 \right) \left( 1-\epsilon \right)}+1}{\frac{1}{\left( \sigma _{n+1}-1 \right) ^2}}.
\end{align}
Since $ \lim\limits_{n\rightarrow \infty} \psi \left( n \right) =\infty  $, there exists an  integer $ \hat{N}\in \mathbb{N}  $ such that for any $ n>\hat{N} $, it holds that  $ \mathrm{e}^{\psi \left( n \right) \left( 1-\epsilon \right)} >3 $ .
Thus, for $ n>\hat{N} $, one has
\begin{align} \label{eq:sigma n+1}
	\sum_{\sigma _{n+1}\geqslant \mathrm{e}^{\psi \left( n+1 \right) \left( 1-\epsilon \right)}+1}{\frac{1}{\left( \sigma _{n+1}-1 \right) ^2}}
	&\leqslant \sum_{\sigma _{n+1}-1\geqslant \mathrm{e}^{\psi \left( n+1 \right) \left( 1-\epsilon \right)}}{\frac{1}{\left( \sigma _{n+1}-1 \right) \left( \sigma _{n+1}-2 \right)}} \notag \\
	&=\sum_{\sigma _{n+1}-1\geqslant \mathrm{e}^{\psi \left( n+1 \right) \left( 1-\epsilon \right)}}{\frac{1}{\left( \sigma _{n+1}-2 \right)}-\frac{1}{\left( \sigma _{n+1}-1 \right)}} \notag \\
	&\leqslant \frac{1}{\mathrm{e}^{\psi \left( n+1 \right) \left( 1-\epsilon \right)} -1}.
\end{align}
For $ \left( \sigma _1,\cdots ,\sigma _n \right) \in D_n\left( \epsilon ,M,r \right)  $, one has
\begin{align*}
	\begin{cases}
		2\leqslant \sigma _i\leqslant M & \text{if } 1\leqslant i\leqslant r,\\
		\sigma _i-1\geqslant \mathrm{e}^{\psi \left( i \right) \left( 1-\epsilon \right)} & \text{if } r\leqslant i\leqslant n.
    \end{cases}
\end{align*}
Then, by the formula \eqref{eq:Jn} and \eqref{eq:sigma n+1}, when $ n>\hat{N} $, we have
\begin{align}\label{JMAX}
	\big| J_n(\sigma _1,\cdots ,\sigma _n) \big| &\leqslant \frac{1}{\prod\limits_{i=r}^n{\mathrm{e}^{2\psi \left( i \right) \left( 1-\epsilon \right) }}}\cdot \sum_{\sigma _{n+1} \geqslant \mathrm{e}^{\psi \left( n+1 \right) \left( 1-\epsilon \right)}+1}{\frac{1}{\left( \sigma _{n+1}-1 \right) ^2}} \notag\\
	&\leqslant \frac{1}{\mathrm{e}^{\psi \left( n+1 \right) \left( 1-\epsilon \right)}-1}\cdot \prod_{i=r}^n{\mathrm{e}^{-2\psi \left( i \right) \left( 1-\epsilon \right) }}.
\end{align}
For convenience, we write
\begin{align*}
	\big| J_n(\tilde{\sigma}_1,\cdots ,\tilde{\sigma}_n) \big|
	:=\underset{\left( \sigma _1,\cdots ,\sigma _n \right) \in D_n\left( \epsilon ,M,r \right)}{\max}\big| J_n(\sigma _1,\cdots ,\sigma _n) \big| .	
\end{align*}
It follows from $(\ref{JMAX})$ and Proposition 4.1 of \cite{falconer2014fractal} that
\begin{align*}
	\underline{\mathrm{dim}}_B\tilde{E}_{\psi ,r,\epsilon}\left( M \right)
	& \leqslant \mathop {\lim\mathrm{inf}} \limits_{n\rightarrow \infty}\frac{\log \#D_n\left( \epsilon ,M,r \right)}{-\log \big| J_n(\tilde{\sigma}_1,\cdots ,\tilde{\sigma}_n) \big|}	\\
	& \leqslant \liminf \limits_{n\rightarrow \infty}\frac{\left( 1+\epsilon \right) \sum\limits_{k=r}^n{\psi \left( k \right)}+\left( r-1 \right) \log M+\sum\limits_{k=r}^n{\log \left( 2\epsilon \psi \left( k \right) \right)}}{2\left( 1-\epsilon \right) \sum\limits_{k=r}^n{\psi \left( k \right)}+\log \left( \mathrm{e}^{\psi \left( n+1 \right) \left( 1-\epsilon \right)}-1\right)} \\
	&=\liminf \limits_{n\rightarrow \infty}\frac{1+\epsilon}{1-\epsilon}\cdot \frac{\sum\limits_{k=r}^n{\psi \left( k \right)}}{2\sum\limits_{k=r}^n{\psi \left( k \right)}+\psi \left( n+1 \right)} \\
	&=\frac{1+\epsilon}{1-\epsilon}\cdot \mathop {\lim\mathrm{inf}} \limits_{n\rightarrow \infty}\frac{1}{2+\frac{\psi \left( n+1 \right)}{\psi \left( r \right) +\cdots +\psi \left( n \right)}} \\
	& =\frac{1+\epsilon}{1-\epsilon}\cdot \frac{1}{2+\mathop {\lim  \mathrm{sup}} \limits_{n\rightarrow \infty}\frac{\psi \left( n+1 \right)}{\psi \left( r\right) +\cdots +\psi \left( n \right)}}\\
	&\leqslant \frac{1+\epsilon}{1-\epsilon}\cdot \frac{1}{2+\mathop {\lim\mathrm{sup}} \limits_{n\rightarrow \infty}\frac{\psi \left( n+1 \right)}{\psi \left( 1 \right) +\cdots +\psi \left( n \right)}}\\
	&= \frac{1+\epsilon}{1-\epsilon}\cdot \frac{1}{2+\eta} ,
\end{align*}
then
\begin{align*}
	\mathrm{dim_H}\,\tilde{E}_{\psi ,r,\epsilon}\left( M \right) \leqslant \underline{\mathrm{dim}}_B\tilde{E}_{\psi ,r,\epsilon}\left( M \right) \leqslant  \frac{1+\epsilon}{1-\epsilon}\cdot \frac{1}{2+\eta}.	
\end{align*}
By \eqref{eq:Hd of E(phi) fangsuo} and \eqref{eq: dim E(phi) of sup}, we have
%\begin{align*}
%	\mathrm{dim_H}\,E_{\psi}\left( r,\epsilon  \right) \leqslant \underset{M\geqslant 1}{\mathrm{sup}}\mathrm{dim_H}\,\tilde{E}_{\psi ,r,\epsilon}\left( M \right) \leqslant  \frac{1+\epsilon}{1-\epsilon}\cdot \frac{1}{2+\eta}.
%\end{align*}
%By , one has
\begin{align*}
	\mathrm{dim_H}\,E\left( \psi \right)  \leqslant \frac{1+\epsilon}{1-\epsilon}\cdot \frac{1}{2+\eta}.	
\end{align*}
%\leqslant \mathop {\mathrm{sup}} \limits_{r\geqslant 1}\mathrm{dim_H}\,E_{\psi}\left( r,\epsilon \right)
By letting $ \epsilon \rightarrow 0^+ $, we can obtain that
\begin{align*}
	\mathrm{dim_H}\,E\left( \psi \right)  \leqslant \frac{1}{2+\eta }.
\end{align*}

\subsection{Proof of Theorem \ref{Thm 1.3}}
Like before, we split the proof into two parts.\par
\textbf{Lower bound} Recall that
\begin{align*}
	\log V =\mathop {\lim  \mathrm{sup}} \limits_{n\rightarrow \infty}\frac{\log \psi \left( n \right)}{n}.
\end{align*}
If $ V=\infty  $, then $ \mathrm{dim_H}\,E_{\mathrm{inf}}\left( \psi \right) \geqslant \frac{1}{1+V} $ holds trivially. Now we assume $ 1\leqslant V<\infty  $, the lower bound of the Hausdorff dimension of $ E_{\mathrm{inf}}\left( \psi \right)   $ is yielded by constructing a suitable Cantor subset of $ E_{\mathrm{inf}}\left( \psi \right)   $  .

 For any $ 0<\epsilon <1 $, define a sequence $\left\{ L_n\left( \epsilon \right) \right\}_{n\ge1}  $ as follows:
\begin{align*}
	L_n\left( \epsilon \right) =\underset{j\geqslant n}{\mathrm{sup}}\left\{ \mathrm{e}^{\psi \left( j \right) \left( V+\epsilon \right) ^{n-j}}\right\} \,\,{\rm{for}\,\,\rm{any}} \,\, n\in \mathbb{N} .	
\end{align*}
It follows from the proof of Theorem 1.3 of \cite{lulufangExceptionalSetsBorel2021} that
\begin{align} \label{eq:L_j}
	\liminf \limits_{n\rightarrow \infty}\frac{\log L_n\left( \epsilon \right)}{\psi \left( n \right)}=1 \,\,\text{and}\,\, \log L_{n+1}\left( \epsilon \right) -\log L_1\left( \epsilon \right) \leqslant \left( V+\epsilon -1 \right) \sum_{i=1}^n\log{L_i\left( \epsilon \right)}.
\end{align}

Now, we use the sequence  $\left\{ L_n\left( \epsilon \right) \right\}_{n\ge1}  $ to construct the Cantor subset of $ E_{\mathrm{inf}}\left( \psi \right)  $. Note that for any $ n\in \mathbb{N}  $, $L_n\left( \epsilon \right) \geqslant \mathrm{e}^{\psi \left( n \right)}  $, which implies $ \lim\limits_{n\rightarrow \infty} L_n\left( \epsilon \right) =\infty  $, then there exsits a positive interger $ M $ such that $ M\lfloor L_n\left( \epsilon \right) +1 \rfloor \geqslant 4 $ holds for all $ n\in \mathbb{N}  $. Define
\begin{align*}
	E\left( \left\{ L_n\left( \epsilon \right) \right\} _{n\geqslant 1} \right) =\Big\{ x\in \left( 0,1 \right] :M\lfloor L_n\left( \epsilon \right) +1 \rfloor \leqslant d_n\left( x \right) \leqslant 2M\lfloor L_n\left( \epsilon \right) +1 \rfloor -1  \,\,{\rm{for}\,\,\rm{any}} \,\,n\in \mathbb{N} \Big\} .
\end{align*}
Similar to \eqref{eq:baohan}, it is easy to verify that
\begin{align*}
	E\left( \left\{ L_n\left( \epsilon \right) \right\} _{n\geqslant 1} \right) \subseteq E_{\mathrm{inf}}\left( \psi \right)  .
\end{align*}
Therefore by Lemma \ref{lem:key lem} and formula \eqref{eq:L_j}, similar to \eqref{eq:Es{r(n)}}, we have
\begin{align*}
	\mathrm{dim_H}\,E\left( \left\{ L_n\left( \epsilon \right) \right\} _{n\geqslant 1} \right) \geqslant \frac{1}{2+\mathop {\lim\mathrm{sup}} \limits_{n\rightarrow \infty}\frac{\log \left( L_{n+1}\left( \epsilon \right) +1 \right)}{\log L_1\left( \epsilon \right) +\cdots +L_n\left( \epsilon \right)}}\geqslant \frac{1}{1+V+\epsilon}.	
\end{align*}
Hence
\begin{align*}
	\mathrm{dim_H}\,E_{\mathrm{inf}}\left( \psi \right)  \geqslant \frac{1}{1+V+\epsilon}.	
\end{align*}
Let $ \epsilon \rightarrow 0^+ $, this shows that
\begin{align*}
	\mathrm{dim_H}\,E_{\mathrm{inf}}\left( \psi \right) \geqslant \frac{1}{1+V}.
\end{align*}
\par
\textbf{Upper bound}  For any $ 0<\epsilon <1 $ and $ r\in \mathbb{N}  $, we define
\begin{align*}
	F_{\psi}\left( r,\epsilon \right) =\Big\{ x\in \left( 0,1 \right] :d_n\left( x \right) \geqslant \mathrm{e}^{\psi \left( n \right) \left( 1-\epsilon \right)}\,\,{\rm{for}\,\,\rm{any}} \,\,n \geqslant r \Big\} .
\end{align*}
%For any $ x\in E_{\mathrm{inf}}\left( \psi \right)  $, one has
%\begin{align*}
%\liminf \limits_{n\rightarrow \infty}\frac{\log \left( d_n\left( x \right) \right)}{\psi \left( n \right)}=1 ,	
%\end{align*}
%then there exists a positive integer $ \tilde{N}_1  $ such that for any $ n>\tilde{N}_1 $, one has
%\begin{align*}
%	d_n\left( x \right) >\exp \left( \psi \left( n \right) \left( 1-\epsilon \right) \right)  ,
%\end{align*}
It is easy to see that
\begin{align*}
	E_{\mathrm{inf}}\left( \psi \right)  \subseteq \bigcup_{r=1}^{\infty}{F_{\psi}\left( r,\epsilon \right)}	,
\end{align*}
then
\begin{align}\label{eq:Hd of F(phi) fangsuo}
	\mathrm{dim_H}\,E_{\mathrm{inf}}\left( \psi \right) \leqslant \mathrm{dim_H}\,\bigcup_{r=1}^{\infty}{F_{\psi}\left( r,\epsilon \right)}=\mathop {\mathrm{sup}} \limits_{r\geqslant 1}\mathrm{dim_H}\,F_{\psi}\left( r,\epsilon \right) .
\end{align}

For any $ r\in \mathbb{N}  $, now we estimate the Hausedorff dimension of $ F_{\psi}\left( r,\epsilon \right)   $,
we consider the following cases. \\
$ \left( \mathrm{i} \right)  $  $ V=1 $. Since $ \lim\limits_{n\rightarrow \infty} \psi \left( n \right) =\infty  $,  it is obvious that
\begin{align*}
	F_{\psi}\left( r,\epsilon \right) \subseteq \left\{ x\in \left( 0,1 \right] :\lim_{n\rightarrow \infty} d_n\left( x \right) =\infty \right\} .
\end{align*}
By Theorem 3.2 of \cite{shen2011fractional}, we have
\begin{align*}
	\mathrm{dim_H}\,F_{\psi}\left( r,\epsilon \right) \leqslant \mathrm{dim_H}\,\left\{ x\in \left( 0,1 \right] :\lim_{n\rightarrow \infty} d_n\left( x \right) =\infty \right\} \,\,=\frac{1}{2}=\frac{1}{1+V}.
\end{align*}
$ \left( \mathrm{ii} \right)  $  $ 1<V<\infty  $. By the definition of $ V $, one has
\begin{align*}
	\psi \left( n \right) \geqslant \left( V-\epsilon \right) ^n
\end{align*}
holds for infinitely  many $ n\in \mathbb{N}  $. We denote
\begin{align*}
U \big( \mathrm{e}^{\left( 1-\epsilon \right)},V-\epsilon \big) = \Big\{ x\in \left( 0,1 \right] :d_n\left( x \right) \geqslant \mathrm{e}^{\left( 1-\epsilon \right) \left( V-\epsilon \right) ^n}\,\, {\rm{for}\,\, \rm{infinitely}\,\,\rm{many}}\,\,n\in \mathbb{N}  \Big\} ,
\end{align*}
then
\begin{align*}
	F_{\psi}\left( r,\epsilon \right) \subseteq U \big( \mathrm{e}^{\left( 1-\epsilon \right)},V-\epsilon \big) .
\end{align*}
Supposed that $ 0<\epsilon <\min \left\{ V-1,1 \right\}  $, then by Theorem 3.1 of \cite{shen2011fractional}, we have
\begin{align*}
	\mathrm{dim_H}\,F_{\psi}\left( r,\epsilon \right) \leqslant \mathrm{dim_H}\,U \big( \mathrm{e}^{\left( 1-\epsilon \right)},V-\epsilon \big) =\frac{1}{1+V-\epsilon}.
\end{align*}
$ \left( \mathrm{iii} \right)  $ $ V=\infty  $. By the definition of $ V $, for any $ M>1  $, there exists infinitely  many $ n\in \mathbb{N}  $ such that
\begin{align*}
	\psi \left( n \right) \geqslant M^n ,
\end{align*}
then
\begin{align*}
	F_{\psi}\left( r,\epsilon \right) \subseteq \Big\{ x\in \left( 0,1 \right] :d_n\left( x \right) \geqslant\mathrm{e}^{\left( 1-\epsilon \right) M^n} \,\, {\rm{for}\,\, \rm{infinitely}\,\,\rm{many}}\,\,n\in \mathbb{N} \Big\} .
\end{align*}
%Supposed that $ 0<\epsilon < 1  $, then by Theorem 3.1 of \cite{shen2011fractional}, we have
%\begin{align*}
%	\mathrm{dim_H}\,F_{\psi}\left( r,\epsilon \right) \leqslant \mathrm{dim_H}\,\mathscr{U} \left( \exp \left( 1-\epsilon \right) ,M \right) =\frac{1}{1+M}.
%\end{align*}
Similar to case $ \left( \mathrm{ii} \right)  $, we have
\begin{align*}
	\mathrm{dim_H}\,F_{\psi}\left( r,\epsilon \right) \leqslant \frac{1}{1+M}.
\end{align*}
By letting $ M\rightarrow \infty  $, one has
\begin{align*}
	\mathrm{dim_H}\,F_{\psi}\left( r,\epsilon \right) \leqslant \lim_{M\rightarrow \infty} \frac{1}{1+M}=0 =\frac{1}{1+V}.
\end{align*}
\par
Combining $ \left( \mathrm{i} \right) -\left( \mathrm{iii} \right)  $ and \eqref{eq:Hd of F(phi) fangsuo} , it follows that
\begin{align*}
		\mathrm{dim_H}\,E_{\mathrm{inf}}\left( \psi \right) \leqslant \underset{r\geqslant 1}{\mathrm{sup}}\,\,\mathrm{dim_H}\,F_{\psi}\left( r,\epsilon \right)   \leqslant
	\begin{cases}
		\frac{1}{1+V} & \text{if } V=1,\\
		\frac{1}{1+V-\epsilon} & \text{if } 1<V<\infty\,\, \text{and}\,\,0<\epsilon <\min \left\{ V-1,1 \right\},\\
		\frac{1}{1+V}  & \text{if } V=\infty \,\,  \text{and}\,\,0<\epsilon <1.
	\end{cases}
\end{align*}
%\begin{align*}
%	\mathrm{dim_H}\,E_{inf}\left( \psi \right) \leqslant \underset{r\geqslant 1}{\mathrm{sup}}\,\,\mathrm{dim_H}\,F_{\psi}\left( r,\epsilon \right)   \leqslant
%	\left\{
%	\begin{aligned}
%		& \frac{1}{1+V}\qquad    \text{if}\,\, V=1 \\
%		&\frac{1}{1+V-\epsilon}\qquad \text{if} \,\,1<V<\infty \,\, and\,\,0<\epsilon <\min \left\{ V-1,1 \right\} \\
%		& \frac{1}{1+V}  \qquad \text{if}\,\,V=\infty \,\,  and\,\,0<\epsilon <1
%	\end{aligned}
%	\right.
%	.
%\end{align*}
By letting $ \epsilon \rightarrow 0^+ $, we conclude that
\begin{align*}
	\mathrm{dim_H}\,E_{\mathrm{inf}}\left( \psi \right) \leqslant \frac{1}{1+V}.
\end{align*}

%\section{acknowledgments.}
%This work was supported by NSFC 12331005.\\ %The authors would like to thank Professor
%%%Jian Xu and Baowei Wang for useful suggestions.\\

%\newpage
{\small}

\end{document}